\documentclass[12pt]{amsart}

\usepackage{amsfonts,amssymb,stmaryrd,amscd,amsmath,latexsym,amsbsy}

\newtheorem{theorem}{Theorem}[section]
\newtheorem{lemma}[theorem]{Lemma}
\newtheorem{proposition}[theorem]{Proposition}
\newtheorem{corollary}[theorem]{Corollary}
\theoremstyle{definition}

\newtheorem{remark}[theorem]{Remark}


\newcommand{\Tr}{\text{Tr}}

\newcommand{\Id}{\text{Id}}
\newcommand{\Aut}{\text{Aut}}

\newcommand{\Rep}{\text{Rep}}

\renewcommand{\O}{\mathcal{O}}

\newcommand{\C}{\mathcal{C}}

\newcommand{\ben}{\begin{enumerate}}
\newcommand{\een}{\end{enumerate}}

\theoremstyle{plain}

\newtheorem*{sol}{Solution}

\theoremstyle{definition}

\theoremstyle{remark}

\newcommand{\solu}[1]{\begin{sol}{\bf (\ref{#1})}}

\def\C{\mathcal{C}}
\def\D{\mathcal{D}}

\def\Aut{\mathop{\mathrm{Aut}}\nolimits}

\def\B{\mathcal{B}}

\def\O{\mathcal{O}}

\def\Vec{\mathrm{Vec}}

\def\k{\mathbf{k}}

\def\Rep{\mathop{\mathrm{Rep}}\nolimits}

\def\FPdim{\mathop{\mathrm{FPdim}}\nolimits}

\pagestyle{plain}

\begin{document}

\title{On faithfulness of the lifting for Hopf algebras and fusion categories}

\author{Pavel Etingof}
\address{Department of Mathematics, Massachusetts Institute of Technology,
Cambridge, MA 02139, USA}
\email{etingof@math.mit.edu}

\begin{abstract} We use a version of Haboush's theorem over complete local Noetherian rings to prove faithfulness of the lifting for semisimple cosemisimple Hopf algebras and separable (braided, symmetric) fusion categories from characteristic $p$ to characteristic zero (\cite{ENO}, Section 9), showing that, moreover, any isomorphism between such structures can be reduced modulo $p$. This  fills a gap in \cite{ENO}, Subsection 9.3. We also show that lifting of semisimple cosemisimple Hopf algebras is a fully faithful functor, and prove that lifting induces an isomorphism on Picard and Brauer-Picard groups. Finally, we show that a subcategory or quotient category of a separable multifusion category is separable (resolving an open question from \cite{ENO}, Subsection 9.4), and use this to show that certain classes of tensor functors between lifts of separable categories to characteristic zero can be reduced modulo $p$. 
\end{abstract}

\maketitle 

\centerline{\bf To Alexander Kirillov Jr. on his 50th birthday with admiration} 

\section{Introduction}

Let $k$ be an algebraically closed field of characteristic $p>0$,  
$W(k)$ be its ring of Witt vectors, $I=(p)\subset W(k)$ the maximal ideal, 
$K$ the fraction field of $W(k)$, $\overline{K}$ its algebraic closure. 
In \cite{EG} it is shown that any semisimple cosemisimple Hopf algebra 
over $k$ has a unique (up to an isomorphism) lift over $W(k)$. 
In \cite{ENO}, Section 9, this result is extended to separable (braided, symmetric) fusion categories, i.e., those of nonzero global dimension.
\footnote{In \cite{ENO}, separable fusion categories are called nondegenerate. But this terminology is confusing since 
for braided fusion categories, this term is also used in an entirely different sense: to refer to categories with trivial M\"uger center. 
For this reason, we adopt a better term ``separable" introduced in \cite{DSS}.} 

Moreover, in \cite{ENO}, Subsection 9.3 
it is claimed that lifting is faithful, i.e., if liftings of two Hopf 
algebras are isomorphic over $\overline{K}$ then these Hopf algebras 
are isomorphic, and similarly for categories (Theorem 9.6, 
Corollary 9.10). This is used in a number of subsequent papers. 

However, it recently came to my attention that the proofs of these faithfulness results given in \cite{ENO}, Subsection 9.3, are incomplete. 
Namely, the proof of Lemma 9.7 (used in the proof of Theorem 9.6) says that by Nakayama's lemma, it suffices to check the finiteness 
of a certain morphism $\phi$ of schemes over $W(k)$ modulo the maximal ideal $I$ (i.e., over $k$). But it is, in fact, not clear how this 
follows from Nakayama's lemma. Namely, finiteness over $k$ does imply 
finiteness over $W(k)/I^N$ for any $N\ge 1$, but this is not sufficient to
conclude finiteness over $W(k)$. \footnote{E.g., $K$ is not module-finite 
over $W(k)$, even though it becomes module-finite (in fact, zero) upon reduction modulo $I^N$, and is also finite over $K$.}  
In fact, the reductivity of the group of twists must be used in the proof, see Remark \ref{finiteness}. 

The main goal of this paper is therefore to provide complete proofs of the results on faithfulness of the lifting. 
This may be done by using results on geometric reductivity and power reductivity of reductive groups over rings, see \cite{Se},\cite{FK}.
We also prove results on integrality of stabilizers of liftings, and show that lifting is a fully faithful functor for semisimple cosemisimple Hopf algebras, and defines an isomorphism of Brauer-Picard and Picard groups of (braided) separable fusion categories.  Finally, we 
prove that subcategories and quotient categories of separable categories are separable (resolving a question from \cite{ENO}, 
Subsection 9.4), and use this to prove that certain types of tensor functors between liftings of separable categories descend to positive characteristic. 

The paper is organized as follows. In Section 2 we describe algebro-geometric preliminaries, i.e., the results on geometric reductivity and power reductivity and their applications. 
In Section 3 we apply these results to proving faithfulness of the lifting and integrality of stabilizers 
for semisimple cosemisimple Hopf algebras and  prove that lifting of such Hopf algebras is a fully faithful functor. In Section 4 we generalize the results of Section 3 to tensor categories and tensor functors, thus providing complete proofs of the results of \cite{ENO}, Subsection 9.3 and 
\cite{EG2}, Theorem 6.1. Also, we apply these results to show that the Brauer-Picard and Picard groups of (braided) separable fusion categories are preserved by lifting. Finally, in Section 5 we show that 
a subcategory and quotient category of a separable category is separable, and apply it to prove a descent result for 
tensor functors between liftings of separable categories. 
  
{\bf Acknowledgements.} I am very grateful to Brian Conrad for useful discussions and help with writing Section 2, and to Davesh Maulik for comments on this section. I thank W. van der Kallen for Remark \ref{vdkrem}, D. Nikshych for Remark \ref{nik}, and V. Ostrik for comments on a draft of this paper. This work was supported by the NSF
grant DMS-1502244.

\section{Auxiliary results from algebraic geometry}

In this section we collect some auxiliary results from algebraic geometry that we will use below. 

\subsection{Power reductivity} 

Let $k$ be an algebraically closed field of characteristic $p>0$,  
$W(k)$ be its ring of Witt vectors, $I=(p)\subset W(k)$ the maximal ideal, 
$K$ the fraction field of $W(k)$, $\overline{K}$ its algebraic closure. 

If $\bold X$ is a scheme over a ring $R$ and $R'$ is a commutative $R$-algebra, then $\bold X_{R'}$ will denote the base change of $\bold X$ from $R$ to $R'$. 

By a {\em reductive group} over a commutative ring $\k$ we will mean a smooth affine group scheme with connected fibers, as in \cite{SGA3}. Such a group $\bold G$ is split when it contains a split fiberwise maximal $\k$-torus as a closed $\k$-subgroup. In our applications, $\bold G$ will always be split, and, in fact, will be a quotient of a product of general linear groups by a central torus. 

We start with restating a result from \cite{FK}, which is a combination of Proposition 6 and Theorem 12 in \cite{FK}. 

We call a ring homomorphism $\psi: B\to C$ {\it power-surjective} if for any element $c\in C$, some positive integer power $c^N$ 
belongs to ${\rm Im}\psi$.  

\begin{proposition}\label{FKprop} 
Let $\k$ be a commutative Noetherian ring and let $\bold G$ be a split
reductive group over $\k$. Let $A$ be a finitely generated commutative
$\k$-algebra on which $\bold G$ acts rationally through algebra automorphisms. If $J$ is
a $\bold G$-invariant ideal in $A$, then the map induced by reducing mod $J$:
$A^{\bold G}\to (A/J)^{\bold G}$ 
is power-surjective.
\end{proposition}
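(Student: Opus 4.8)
The plan is to recognize this statement as a reformulation of the \emph{power reductivity} of split reductive groups over Noetherian rings, and to obtain it by combining two ingredients of Franjou--van der Kallen \cite{FK}. The first ingredient is that a split reductive group $\bold G$ over a commutative Noetherian ring $\k$ is power reductive: for every rational $\bold G$-module $M$ that is finitely generated over $\k$ and every surjection of $\bold G$-modules $p\colon M\twoheadrightarrow \k$ onto the trivial module, there exist $N\ge 1$ and a $\bold G$-invariant element $u\in S^N M$ whose image under $S^N p\colon S^N M\to S^N\k=\k$ equals $1$. The second ingredient is the equivalence of this property with the power-surjectivity of $A^{\bold G}\to (A/J)^{\bold G}$ for all finitely generated $\k$-algebras $A$ carrying a rational $\bold G$-action and all $\bold G$-invariant ideals $J$. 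Over a field the first ingredient is already equivalent to Mumford's geometric reductivity, hence, in characteristic $p$, to Haboush's theorem; the passage to an arbitrary Noetherian base is the substantive content, and for that I would simply cite \cite{FK} (cf.\ also \cite{Se}) rather than reprove it.

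Granting power reductivity, the deduction is formal and I would carry it out as follows. Given $\bar c\in (A/J)^{\bold G}$, choose any lift $c\in A$. Rationality of the action lets me enclose $c$ in a $\bold G$-submodule $M\subseteq A$ that is finitely generated over $\k$. Then $M\cap J$ is a $\bold G$-submodule, $M/(M\cap J)$ embeds into $A/J$ as the $\k$-span of $\bar c$, and since $\bar c$ is $\bold G$-invariant this produces a $\bold G$-module surjection $p\colon M\twoheadrightarrow \k\bar c$ onto a cyclic trivial module. Power reductivity then supplies $N\ge 1$ and a $\bold G$-invariant $u\in S^N M$ mapping to $\bar c^{\,N}$ under $S^N p\colon S^N M\to S^N(\k\bar c)$. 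Finally I would push forward along the multiplication map $S^N M\to A$: it is $\bold G$-equivariant because $\bold G$ acts by algebra automorphisms, it is compatible with reduction modulo $J$, and it sends $u$ to an element $a\in A^{\bold G}$ with $a\equiv \bar c^{\,N}\pmod J$. This exhibits $\bar c^{\,N}$ in the image of $A^{\bold G}\to (A/J)^{\bold G}$, which is the assertion.

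The main obstacle is thus entirely contained in the first ingredient: power reductivity of $\bold G$ over a general Noetherian base is a deep theorem (it already requires Haboush's theorem over a field), and I would invoke \cite{FK} for it. The only subtlety in the formal part is that $\bar c$ need not span a free rank-one direct summand of $A/J$, so $M/(M\cap J)$ is in general merely a quotient $\k\bar c$ of the trivial module $\k$; this is harmless because symmetric powers and the multiplication map are functorial on $\bold G$-modules, so the argument runs verbatim with $\k\bar c$ in place of $\k$, and in any case \cite{FK} states power reductivity in a form that applies directly to such quotient modules.
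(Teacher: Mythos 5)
Your proposal is correct and takes essentially the same route as the paper, which offers no proof at all but simply records this statement as a combination of Proposition 6 and Theorem 12 of \cite{FK} (power reductivity of split reductive groups over an arbitrary base, plus its formal equivalence with power-surjectivity of $A^{\bold G}\to (A/J)^{\bold G}$). Your added derivation of the latter from the symmetric-power formulation is exactly the content of that equivalence in \cite{FK}, and your handling of the cyclic module $\k\bar c$ (e.g.\ by taking $M$ to be the minimal $\bold G$-submodule containing a lift of $\bar c$, so that its image in $A/J$ is precisely $\k\bar c$) is sound.
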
 

In \cite{FK} this property of $\bold G$ is called {\it power reductivity}. 
Note that it is not assumed in \cite{FK} that $\k$ is Noetherian, but we will use Proposition \ref{FKprop} only for Noetherian (in fact, complete local) rings $\k$. 

\begin{remark}\label{vdkrem} It was explained to us by W. van der Kallen that power reductivity of $\bold G$ (i.e., Proposition \ref{FKprop}) over complete local Noetherian rings $\k$ follows from \cite{T}, Theorem 3.8. Namely, 
from this theorem one easily gets property (INT) of \cite{FK} and then power reductivity.
This is discussed in Section 2.4 and Theorem 2.2 of \cite{vdK}. 
More precisely, this paper assumes a base field, but this assumption is not essential.
Compare also with Theorem 16.9 in \cite{G} and Lemma 2.4.7 in \cite{Sp}.
\end{remark}

\subsection{Faithfulness of lifting for reductive group actions} 

We will need the following proposition, which is sufficient to justify the main results of \cite{ENO}, Subsection 9.3. 

\begin{proposition}\label{ag}
Let $\bold V$ be a rational representation of a split reductive group $\bold G$ on an affine space defined over $W(k)$. 
Let $v_1,v_2\in \bold V(W(k))$. Assume that the $\bold G$-orbits of the reductions 
$v_1^0,v_2^0\in \bold V(k)$ are closed and disjoint. Then $v_1,v_2$ are not conjugate under 
$\bold G(\overline{K})$. 
\end{proposition}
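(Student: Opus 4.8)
The plan is to build a single $\bold G$-invariant regular function on $\bold V$ that takes a unit value at $v_1$ and a non-unit value at $v_2$, so that these points cannot be conjugate even over $\overline K$. The construction has two stages, each an application of power reductivity (Proposition~\ref{FKprop}): one over $k$ and one over $W(k)$.

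Write $A=\O(\bold V)$, a polynomial ring over $W(k)$ on which $\bold G$ acts rationally by $W(k)$-algebra automorphisms, and put $A_k=A/pA=\O(\bold V_k)$; note that $\bold G$ and its base change $\bold G_k$ are split reductive over $W(k)$ and $k$. \emph{First stage, over $k$.} Let $Y_i=\bold G\cdot v_i^0\subset\bold V_k$, which are closed by hypothesis and disjoint, and let $J_i\subset A_k$ be the ideal of functions vanishing on $Y_i$; each $J_i$ is $\bold G$-invariant. Since $Y_1\cap Y_2=\emptyset$ and $k$ is algebraically closed, $J_1+J_2=A_k$, so the Chinese remainder theorem gives a $\bold G$-equivariant isomorphism $A_k/(J_1\cap J_2)\cong A_k/J_1\times A_k/J_2$. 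The idempotent $e=(1,0)$ on the right-hand side is $\bold G$-invariant. Applying Proposition~\ref{FKprop} with $\k=k$ to the $\bold G$-invariant ideal $J_1\cap J_2$, the reduction map $A_k^{\bold G}\to(A_k/(J_1\cap J_2))^{\bold G}$ is power-surjective; since $e^N=e$ for all $N\ge 1$, we get an honest invariant $\bar h\in A_k^{\bold G}$ with $\bar h\equiv 1\pmod{J_1}$ and $\bar h\equiv 0\pmod{J_2}$. Evaluating at $v_i^0\in Y_i$ yields $\bar h(v_1^0)=1$ and $\bar h(v_2^0)=0$ in $k$.

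\emph{Second stage, lifting to $W(k)$.} Apply Proposition~\ref{FKprop} with $\k=W(k)$ (a complete local Noetherian ring) to the $\bold G$-invariant ideal $pA$: the reduction map $A^{\bold G}\to A_k^{\bold G}$ is power-surjective, so some power $\bar h^{M}$ lifts to an element $H\in A^{\bold G}$. Being $\bold G$-invariant as an element of the coordinate ring, $H$ is fixed by $\bold G(R)$ acting on $\bold V(R)$ for every commutative $W(k)$-algebra $R$, in particular for $R=\overline K$. Evaluating at the given $W(k)$-points, $H(v_1)\equiv\bar h(v_1^0)^M=1$ and $H(v_2)\equiv\bar h(v_2^0)^M=0$ modulo $p$, hence $H(v_1)\in 1+pW(k)\subset W(k)^{\times}$ and $H(v_2)\in pW(k)$. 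In particular $H(v_1)\neq H(v_2)$ in $W(k)$, and since $W(k)\hookrightarrow\overline K$ these stay distinct in $\overline K$. If there were $g\in\bold G(\overline K)$ with $g\cdot v_1=v_2$ in $\bold V(\overline K)$, invariance of $H$ would give $H(v_1)=H(gv_1)=H(v_2)$, a contradiction. Therefore $v_1$ and $v_2$ are not $\bold G(\overline K)$-conjugate.

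The step I expect to be the real content is the second one, lifting the separating invariant from characteristic $p$ to $W(k)$. This is precisely the gap pointed out in the introduction: reducing modulo $p^N$ and invoking Nakayama does not suffice, and geometric reductivity over a field is not enough by itself; one genuinely needs power reductivity of $\bold G$ over the base ring $W(k)$, i.e. Proposition~\ref{FKprop} with $\k=W(k)$ and $J=pA$, which rests on \cite{FK} (equivalently on \cite{T} and \cite{vdK}, cf.\ Remark~\ref{vdkrem}). The first stage is the classical consequence of reductivity over an algebraically closed field, and the descent through $\overline K$ only uses functoriality of invariant functions.
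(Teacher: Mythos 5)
Your proof is correct and follows essentially the second proof given in the paper: separate the closed disjoint orbits over $k$ by a $\bold G$-invariant function, lift a power of it to a $\bold G$-invariant over $W(k)$ via power reductivity (Proposition~\ref{FKprop} with $\k=W(k)$ and $J=pA$), and evaluate at $v_1,v_2$ to get distinct values in $W(k)$. The only difference is in the first stage, where the paper invokes Haboush's theorem directly to produce the separating invariant over $k$, while you derive it from power reductivity over $k$ applied to the Chinese-remainder idempotent of $A_k/(J_1\cap J_2)$ --- a valid and slightly more self-contained route to the same function.
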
  

\begin{proof} 
This follows from \cite{Se}, Theorem 3, part (ii), for $R=W(k)$, 
$X=\bold V$ and $Y=\bold V/\bold G:={\rm Spec}{\mathcal O}(\bold V)^{\bold G}$. Namely, 
this theorem says that $v_1^0,v_2^0$ are {\it distinct} points of $Y(k)$. 
This implies that there exists a ${\bold G}$-invariant polynomial 
$f\in {\mathcal O}(\bold V)$ such that $f(v_1^0)\ne f(v_2^0)$ in $k$. But $f(v_i^0)$ 
are the reductions of $f(v_i)$ mod $I$, so $f(v_1)\ne f(v_2)$ in $W(k)$. 
But then $v_1,v_2$ cannot be conjugate under $\bold G(\overline{K})$. 

Here is another proof, using Proposition \ref{FKprop}, for $\k=W(k)$, $A={\mathcal O}(\bold V)$ and $J=I{\mathcal O}(\bold V)$. 
Proposition \ref{FKprop} says that for any $f\in {\mathcal O}(\bold V_k)^{\bold G(k)}$, some power $f^N$ of $f$ lifts to a
$\bold G$-invariant $h\in {\mathcal O}(V)$. Now, by Haboush's theorem (\cite{Ha}), we can choose $f$ so that $f(v_1^0)=0$ and $f(v_2^0)=1$. 
Then $h(v_1)\in I$ and $h(v_2)\in 1+I$, hence $h(v_1)\ne h(v_2)$, and $v_1,v_2$ cannot be conjugate under $\bold G(\overline{K})$.  
\end{proof} 

\begin{remark}
The closedness assumption for $\bold G$-orbits in Proposition \ref{ag} cannot be removed.
For instance, let $\bold G=\Bbb G_m$ and $\bold V=\Bbb A^1$ be the tautological representation of $\bold G$. 
Take $v_1=p$ and $v_2=1$. Then $v_1^0=0$ and $v_2^0=1$, so their $\bold G$-orbits are disjoint. 
On the other hand, $v_1$ and $v_2$ are conjugate by the element $p\in \bold G$. Note that the orbit 
of $v_2^0$ is $\Bbb A^1\setminus \lbrace{0\rbrace}$, hence not closed. 

The reductivity assumption cannot be removed, either. Namely, let  $\bold G=\Bbb G_a$ be the group of translations
$x\mapsto x+b$, and $\bold V$ be the 2-dimensional representation of $\bold G$ on  linear (not necessarily homogeneous) functions of $x$. 
Let $v_1=px$ and $v_2=1+px$.  Then $v_1^0=0$, $v_2^0=1$. These are $\bold G$-invariant vectors, so their orbits are closed and disjoint. 
Still, $v_1$ is conjugate to $v_2$ by the transformation $x\to x+\frac{1}{p}$. 
\end{remark}

\subsection{Integrality of the stabilizer}

Let $\bold V$ be a rational representation of a split reductive group $\bold G$ on an affine space defined over $W(k)$.
Let $v\in \bold V(W(k))$ and $v^0\in \bold V(k)$ be its reduction modulo $p$. 
Let $\bold S\subset \bold G$ be the scheme-theoretic stabilizer of $v$, and 
$S=\bold S(\overline K)$ be the stabilizer of $v$ in $\bold G(\overline K)$. 
Let $\bold S_0=\bold S_k\subset \bold G_k$ be the scheme-theoretic stabilizer of $v^0$, and $S_0=\bold S_0(k)=\bold S(k)$
be the stabilizer of $v^0$ in $\bold G(k)$. 

\begin{proposition}\label{ag2} 
Assume that:

(i)  the $\bold G$-orbit of $v^0$ is closed; 

(ii) $\bold S_0$ is finite and reduced; 

(iii) there is a lifting map $i: S_0\to S$ such that $i(S_0)\in \bold G(W(k))$ and  
the reduction of $i(g_0)$ modulo $p$ equals $g_0$ for any $g_0\in S_0$. 

Then $i$ is an isomorphism. 
\end{proposition}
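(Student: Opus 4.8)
The plan is to show that $i\colon S_0\to S$ is both injective and surjective as a map of abstract groups (it is automatically a group homomorphism once $S_0$ is finite reduced, since group multiplication is a polynomial map and reduction mod $p$ is a ring homomorphism). Injectivity is immediate from assumption (iii): if $i(g_0)=i(h_0)$ then reducing mod $p$ gives $g_0=h_0$. So the whole content is surjectivity, i.e.\ that every $\overline K$-point of $\bold S$ actually comes from an integral point lying over some $g_0\in S_0$.

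The key step will be to control the size of $S=\bold S(\overline K)$ from above using the closed-orbit hypothesis. First I would observe that since $\bold S_0$ is finite and reduced, $|S_0|=\dim_k\mathcal O(\bold S_0)$ equals $\dim \mathcal O(\bold G_k)-$ (something), but more usefully: by assumption (i) the orbit $\bold G_k\cdot v^0$ is closed, hence affine, and $\dim \bold G_k\cdot v^0 = \dim \bold G_k - \dim \bold S_0 = \dim \bold G_k$ because $\bold S_0$ is finite (dimension $0$). The point of closedness is that it lets me relate things to the GIT quotient as in Proposition \ref{ag}: the orbit map $\bold G \to \bold V$, $g\mapsto gv$, and its reduction. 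I would consider the scheme-theoretic stabilizer $\bold S$ over $W(k)$ and show that $\bold S$ is quasi-finite over $W(k)$ near the closed point, using that its special fiber $\bold S_0$ is finite. Then I would argue that $\bold S(\overline K) = \bold S_K(\overline K)$ has cardinality at most $|S_0|$: this is a semicontinuity/specialization statement, and the cleanest route is to use that the closedness of the orbit over $k$ propagates (via Proposition \ref{ag}-type reasoning, or directly via \cite{Se}, Theorem 3) to show that distinct $\overline K$-points of $\bold S$ specialize to distinct $k$-points of $\bold S_0$ — equivalently, no two $\overline K$-conjugating elements of $\bold G$ for $v$ become equal mod $p$ unless they already agreed, which is where one needs that the orbit of $v^0$ is closed so that $v$ is "GIT-stable" enough for the specialization map on stabilizers to be injective.

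Once I have $|S| \le |S_0|$ together with the injection $i\colon S_0\hookrightarrow S$ from (iii), a counting argument forces $i$ to be a bijection, hence a group isomorphism. The main obstacle I expect is precisely the inequality $|\bold S(\overline K)|\le |\bold S_0(k)|$: one must rule out that the stabilizer "grows" upon passing from the special fiber to the generic fiber, and this is exactly the phenomenon that fails without reductivity (cf.\ the $\Bbb G_a$ example in the Remark following Proposition \ref{ag}, where $v$ and $v^0$ have equal stabilizers but an extra $\overline K$-element conjugates $v_1$ to $v_2$). The correct tool is the closed-orbit hypothesis (i) combined with power reductivity (Proposition \ref{FKprop}) or \cite{Se}, Theorem 3: closedness of $\bold G_k\cdot v^0$ implies, after base change, that $\bold G_{\overline K}\cdot v_{\overline K}$ is the "same size" orbit and that the specialization map $\bold S(\overline K)\to \bold S_0(k)$ is injective, so that together with reducedness of $\bold S_0$ the generic stabilizer cannot exceed the special one. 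I would also need to check the harmless point that $i$ lands in $\bold G(W(k))$ and is multiplicative, which is given, and that $S = \bold S(\overline K)$ rather than some larger set — but these are formal.
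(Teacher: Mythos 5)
There is a genuine gap, and it sits exactly at the point you label ``semicontinuity/specialization.'' Your counting strategy ($|S|\le|S_0|$ plus the injection $i$ forces a bijection) is fine \emph{provided} every element of $S=\bold S(\overline K)$ actually specializes to a $k$-point of $\bold S_0$. But a $\overline K$-point of $\bold S$ has no reduction modulo $p$ unless its matrix entries are integral, i.e.\ lie in $\overline{W(k)}$, and this integrality is precisely the hard content of the proposition --- it is what fails in the non-reductive counterexample you cite, where the extra stabilizer element involves $1/p$ and hence cannot be reduced at all. So the phrase ``distinct $\overline K$-points of $\bold S$ specialize to distinct $k$-points of $\bold S_0$'' presupposes the existence of the specialization map, which is the thing to be proved; neither Proposition \ref{ag} nor \cite{Se}, Theorem 3 (which separate closed orbits by invariant functions) gives this directly.

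The paper's argument supplies the missing mechanism as follows. Fix a defining representation $\bold U$ of $\bold G$. Since the orbit $\bold X=\bold G v^0$ is closed (hypothesis (i)) and $\bold S_0$ is reduced (hypothesis (ii)), one has $\bold G_k/\bold S_0\cong\bold X$ and hence ${\mathcal O}(\bold X\times\bold U_k)^{\bold G(k)}={\mathcal O}(\bold U_k)^{S_0}$. Noether's theorem gives finitely many homogeneous generators $f_1,\dots,f_m$ of ${\mathcal O}(\bold U_k)^{S_0}$, and power reductivity (Proposition \ref{FKprop}) lifts powers $f_j^{N_j}$ to $\bold G$-invariants $h_{f_j}$ on $\bold V\times\bold U$; evaluating at $v$ produces $S$-invariant elements of ${\mathcal O}(\bold U)$ over which ${\mathcal O}(\bold U)$ is module-finite (a graded Nakayama argument). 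Any linear $F\in\bold U^*(W(k))$ then satisfies a monic equation with $S$-invariant integral coefficients, and applying $g\in S$ shows $gF$ satisfies the same equation, forcing the matrix entries of $g$ into $\overline{W(k)}$. Only at this point does $g$ reduce to some $g_0\in S_0$, and then reducedness of $\bold S_0$ is used once more to show $g\,i(g_0)^{-1}=1$. Your proposal contains none of this construction, so as written it does not constitute a proof.
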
 

\begin{proof} 
 Let $\bold U$ be a defining representation of $\bold G$, presenting it as a closed subgroup 
 of $GL(\bold U)$. The main part of the proof is showing that the matrix elements 
of any $g\in S$ in some basis of $\bold U(W(k))$ are integral over $W(k)$. 
To this end, we want to construct a lot of $S$-invariants in ${\mathcal O}(\bold U)$, so that 
 ${\mathcal O}(\bold U)$ is integral over the subalgebra generated by these invariants. 
 
 Let $\bold X=\bold Gv^0\subset \bold V_k$ 
be the $\bold G$-orbit of $v^0$ over $k$, which is closed by (i). Since $\bold S_0$ is reduced by (ii), 
the natural morphism $\bold G_k/\bold S_0\to \bold V_k$ given by $g\mapsto gv^0$ defines an isomorphism 
$\bold G_k/\bold S_0\cong \bold X$. Therefore, we have 
${\mathcal O}(\bold X\times \bold U_k)^{\bold G(k)}={\mathcal O}(\bold U_k)^{S_0}$. 
Thus, given a homogeneous $f\in {\mathcal O}(\bold U_k)^{S_0}$ of some degree $\ell$, we may view $f$ as a 
$\bold G$-invariant regular function on $\bold X\times \bold U_k$. 
By Proposition \ref{FKprop}, some power $f^N$ lifts to a $\bold G$-invariant polynomial
$h_f$ on $\bold V\times \bold U$, homogeneous of degree $N\ell$ in the second variable (as
${\mathcal O}(\bold X\times \bold U_k)$ is a quotient of ${\mathcal O}(\bold V\times \bold U)$ by a
$\bold G$-invariant ideal). Then $h_f(v,\cdot)$ is a lift over $W(k)$ of $f^N(v^0,\cdot)$, which is an
$S$-invariant element of $\mathcal O(\bold U)$, homogeneous of degree
$N\ell$.

Since by (ii) $S_0$ is finite, Noether's theorem allows us to pick a finite collection of homogeneous generators $f_1,...,f_m\in 
{\mathcal O}(\bold U_k)^{S_0}$, of some degrees $\ell_1,...,\ell_m$. Let $h_{f_j}$ be a lift of $f_j^{N_j}$ as above, $j=1,...,m$.

\begin{lemma}\label{mfi} The algebra $\mathcal O(\bold U)$ is module-finite over $W(k)[h_{f_1},...,h_{f_m}]$. 
\end{lemma} 

\begin{proof} First, ${\mathcal O}(\bold U_k)^{S_0}=k[f_1,...,f_m]$ is module-finite over $k[f_1^{N_1},...,f_m^{N_m}]$.
Also, by Noether's theorem, ${\mathcal O}(\bold U_k)$ is module-finite over 
${\mathcal O}(\bold U_k)^{S_0}$. Thus, ${\mathcal O}(\bold U_k)$ is module-finite over 
$k[f_1^{N_1},...,f_m^{N_m}]$. 

Let $w_1^0,...,w_r^0$ be homogeneous module generators of ${\mathcal O}(\bold U_k)$ over
$k[f_1^{N_1},...,f_m^{N_m}]$, of degrees $p_1,...,p_r$. Let $w_j$ be any homogeneous lifts of $w_j^0$ over $W(k)$, $j=1,...,r$. We claim that $w_1,...,w_r$ are module generators of ${\mathcal O}(\bold U)$ over 
$R:=W(k)[z_1,...,z_m]$, where $z_j$ acts by multiplication by $h_{f_j}$. Indeed, set $\deg(z_j)=N_j\ell_j$. 
For each degree $s$, we have a natural map $\psi_s: R[s-p_1]\oplus...\oplus R[s-p_r]\to {\mathcal O}(\bold U)[s]$, where 
$[s]$ denotes the degree $s$ part; namely, $\psi_s(b_1,...,b_r)=\sum_{j=1}^r b_jw_j$. 
The map $\psi_s$ is a morphism of free finite rank $W(k)$-modules, and the reduction of 
$\psi_s$ mod $I$ is surjective, since $w_1^0,...,w_r^0$ generate ${\mathcal O}(\bold U_k)$ as a module over 
$k[f_1^{N_1},...,f_m^{N_m}]$. Hence, $\psi_s$ is surjective as well, which implies the lemma. 
\end{proof}

By Lemma \ref{mfi}, for any linear function $F\in \bold U^*(W(k))$ (where $\bold U^*$ is the dual representation to $\bold U$), we have
\begin{equation}\label{integ}
F^n+a_1F^{n-1}+...+a_n=0,
\end{equation}
where $a_j\in W(k)[h_{f_1},...,h_{f_m}]$. 

Let $g\in S$. Acting by $g$ on equation \eqref{integ}, and using that $ga_j=a_j$ (since by construction $h_{f_j}$ are $S$-invariant), we get that $gF$ also satisfies \eqref{integ}. This means that if $u\in \bold U(W(k))$ then $gF(u)\in \overline{K}$ is integral over $W(k)$ (as $a_j(u)\in W(k)$). Since the integral closure of $W(k)$ in $\overline{K}$ is $\overline{W(k)}$, the ring 
of integers  in $\overline{K}$, we get that $gF(u)\in \overline{W(k)}$, hence $gF\in \bold U^*(\overline{W(k)})$. Thus $gF\in \bold U^*(Q)$, where $Q\subset \overline{W(k)}$ is a finite extension of $W(k)$. So, the matrix elements of $g$ in some free $W(k)$-basis of $\bold U$ belong to $Q$. Thus, $g$ is a lift of some $g_0\in S_0$ over $Q$. 
Using (iii), consider the element $\bar g:=gi(g_0)^{-1}\in S\cap \bold G(Q)=\bold S(Q)$, and let $\bar g_N\in \bold S(Q/(p^N))$ be the reduction of $\bar g$ modulo $p^N$. By construction, the image of $\bar g$ in $\bold G(k)$ is $1$. 
Since $\bold S_0$ is reduced by (ii), this implies that $\bar g_N=1$ for all $N$.
Thus, $\bar g=1$ and $g=i(g_0)$, as desired.   
\end{proof} 

\begin{remark} Condition (i) in Proposition \ref{ag2}  is essential.  
Indeed, take $\bold G=GL(2)$ acting on 
$\bold V=V_2\oplus V_1$, where $V_m$ is the natural representation of $\bold G$ on homogeneous polynomials of two variables $x,y$ of degree $m$. 
Take $v=x_1(y_1-px_1)+y_2\in \bold V(W(k))$. Then $v^0=x_1y_1+y_2$. If $gv^0=v^0$ then $g$ preserves $y_2$, hence $y_1$, so it preserves $x_1$. Thus $g=1$, hence $\bold S_0=1$. 
On the other hand, $S=\lbrace{1,s\rbrace}$, where $s(y)=y$, $s(x)=\frac{1}{p}y-x$.  

The reductivity of $\bold G$ is essential as well. To see this, take $\bold G={\rm Aff}(1)$, the group of affine linear transformations $(a,b)$ given by $x\mapsto ax+b$, $a\ne 0$, 
and take $\bold V=\bold Q\oplus \bold U$, where $\bold Q$ is the space of quadratic (not necessarily homogeneous) 
functions of $x$, and $\bold U$ is the 1-dimensional representation with basis vector $z$ defined by $(a,b)(z)=a^2z$. 
We can then take $v=x(1-px)+z$, so that $v^0=x+z$. Then, as before, $\bold S_0=1$, but $S=\lbrace{1,s\rbrace}$, where $s(x)=-x+\frac{1}{p}$. 
 Note that the $\bold G$-orbit of $v^0$ is closed in this case.   
\end{remark}

\subsection{Finiteness of the orbit map} 

The results of this subsection are not needed for the proof of the main results. They are only included to justify Lemma 9.7 of \cite{ENO}, whose original proof is incomplete. 

We keep the setting of Proposition \ref{ag2}.

\begin{lemma}\label{finetale} (i) For all $r\ge 1$, $\mathcal O(\bold S)/p^r\mathcal O(\bold S)$ is a free $W(k)/(p^r)$-module of rank $|S_0|$.

(ii) $\bold S$ is the lift of $\bold S_0$ to $W(k)$, i.e., ${\mathcal O}(\bold S)$ is a free $W(k)$-module of rank $|S_0|$. 
Thus, the tautological morphism $\pi: \bold G\to \bold G/\bold S$ is finite \'etale. 
\end{lemma}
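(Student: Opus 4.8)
The plan is to prove directly that ${\mathcal O}(\bold S)$ is a free $W(k)$-module of rank $|S_0|$; this yields the freeness assertion in (ii), and (i) is then immediate on reducing mod $p^r$. The point to watch is that a priori ${\mathcal O}(\bold S)$ is only a finitely generated $W(k)$-\emph{algebra}, not a finite $W(k)$-\emph{module}, so Nakayama's lemma over $W(k)$ is not available for it directly --- this is exactly the sort of step that was missing in \cite{ENO}. The remedy is to use hypotheses (ii), (iii) of Proposition \ref{ag2}, together with the conclusion of that proposition, to squeeze ${\mathcal O}(\bold S)$ between two copies of $W(k)^{|S_0|}$.

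Concretely: each $i(g_0)\in \bold G(W(k))$ fixes $v$ and hence is a $W(k)$-point of $\bold S$, i.e. a section $\sigma_{g_0}\colon {\rm Spec}\,W(k)\to\bold S$. As $\bold S$ is affine (a closed subscheme of $\bold G$), hence separated over $W(k)$, each $\sigma_{g_0}$ is a closed immersion; and since $i(g_0)\bmod p=g_0$ are pairwise distinct points of $\bold S_0$, these closed immersions have pairwise disjoint images and therefore assemble into a closed immersion $\coprod_{g_0\in S_0}{\rm Spec}\,W(k)\hookrightarrow\bold S$, i.e. a surjection of $W(k)$-algebras $q\colon{\mathcal O}(\bold S)\twoheadrightarrow W(k)^{|S_0|}$. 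Put $\mathcal J=\ker q$; I will show $\mathcal J=0$ by bounding it modulo $p$ and after inverting $p$. Modulo $p$: since $\bold S_0$ is finite and reduced over the algebraically closed field $k$ (hypothesis (ii)) and the $g_0$ are exactly its $k$-points, the reduction of $q$ is the isomorphism ${\mathcal O}(\bold S_0)\xrightarrow{\sim}k^{|S_0|}$ given by evaluation at the $g_0$; computing $\ker\!\big({\mathcal O}(\bold S)\to{\mathcal O}(\bold S_0)\xrightarrow{\sim}k^{|S_0|}\big)$ in two ways gives $p{\mathcal O}(\bold S)=q^{-1}\!\big(pW(k)^{|S_0|}\big)$, so $\mathcal J\subseteq p{\mathcal O}(\bold S)$; writing $x=py\in\mathcal J$ and using that $W(k)^{|S_0|}$ is torsion-free forces $y\in\mathcal J$, hence $\mathcal J=p\mathcal J$. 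After inverting $p$: the generic fibre $\bold S_K$ is a group scheme of finite type over the characteristic-$0$ field $K$, hence smooth by Cartier's theorem and in particular reduced; by Proposition \ref{ag2} its geometric points form the set $S\cong S_0$ of size $|S_0|$, so $\bold S_K$ is finite over $K$ with $\dim_K{\mathcal O}(\bold S_K)=|S_0|$. Thus $q\otimes_{W(k)}K$ is a surjection of $|S_0|$-dimensional $K$-vector spaces, hence an isomorphism, so $\mathcal J\otimes_{W(k)}K=0$, i.e. $\mathcal J$ is $p$-power torsion. Finally, $\mathcal J$ is finitely generated over the Noetherian ring ${\mathcal O}(\bold S)$ and $\mathcal J=p\mathcal J$, so the determinant trick gives $\alpha\in{\mathcal O}(\bold S)$ with $(1-p\alpha)\mathcal J=0$; for $x\in\mathcal J$ with $p^n x=0$ we then get $x=p\alpha x=p^2\alpha^2 x=\cdots=p^n\alpha^n x=0$. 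Hence $\mathcal J=0$ and ${\mathcal O}(\bold S)\cong W(k)^{|S_0|}$.

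This establishes (i) and the freeness statement in (ii), and shows $\bold S\cong\coprod_{g_0\in S_0}{\rm Spec}\,W(k)$ is the unique étale lift of $\bold S_0$. It remains to note that $\bold S$, being a finite étale closed subgroup scheme of the affine group scheme $\bold G$ acting freely on $\bold G$ by translation, has a quotient $\bold G/\bold S$ that exists as a scheme with $\bold G\to\bold G/\bold S$ an fppf $\bold S$-torsor (\cite{SGA3}); since $\bold S$ is finite étale this torsor is étale-locally trivial, so $\pi$ is finite étale. The only genuine obstacle I anticipate is resisting the unwarranted application of Nakayama over $W(k)$ to the non-module-finite algebra ${\mathcal O}(\bold S)$; the construction of the $|S_0|$ integral sections from hypothesis (iii), combined with the reducedness of both $\bold S_0$ and $\bold S_K$, is precisely what makes the squeeze argument go through.
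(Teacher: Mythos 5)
Your argument is correct and is essentially the paper's proof: your $q$ is exactly the paper's evaluation map $\tau(f)(g_0)=f(i(g_0))$, and both proofs identify $\ker q$ as a torsion ideal by combining the isomorphism mod $p$ (reducedness of $\bold S_0$) with the dimension count on the generic fiber (Cartier plus Proposition \ref{ag2}). The only difference is the endgame: the paper proves (i) first by a length/Nakayama argument over $W(k)/(p^r)$ and uses it to rule out a nonzero torsion element, whereas you establish $\mathcal J=p\mathcal J$ and invoke the determinant trick --- a cosmetic variation.
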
 

\begin{proof} 
(i) Let ${\rm Fun}(X,R)$ stand for the set of functions from $X$ to $R$.
We have a natural homomorphism $\tau: {\mathcal O}(\bold S)\to {\rm Fun}(S_0,W(k))$, given by 
$\tau(f)(g_0)=f(i(g_0))$. Since $\bold S_0$ is finite and reduced and there  is a lifting map $i: S_0\to S$,
the reduction of $\tau$ modulo $p^r$ is an isomorphism, implying (i).  

To prove (ii), note that $\tau$ is surjective, as it is surjective modulo $p$. 
Let $J:={\rm Ker}\tau$. Tensoring with $K$, we get a short exact sequence of $K$-vector spaces
$$
0\to K\otimes_{W(k)}J\to \mathcal O(\bold S_K)\to {\rm Fun}(S_0,K)\to 0
$$
Since $\bold S_K$ is automatically reduced, and $\bold S(K)=S$ is isomorphic to $S_0$ by Proposition \ref{ag2}, 
we obtain by counting dimensions that $K\otimes_{W(k)}J=0$, i.e., $J$ is the torsion ideal in ${\mathcal O}(\bold S)$. 
Since $\bold S$ is of finite type, $J$ is finitely generated, hence killed by $p^N$ for some $N\ge 1$. Hence, if $J\ne 0$, then there exists $f\in J$ such that 
$f\notin p{\mathcal O}(S)$. Since $p^Nf=0$, this contradicts (i) for $r>N$, yielding (ii). 
\end{proof} 

Consider the morphism $\phi: \bold G\to \bold V$ given by $\phi(g)=gv$, which we call {\it the orbit map}. It induces 
a natural morphism $\nu: \bold G/\bold S\to \bold V$ such that 
$\phi=\nu\circ \pi$. It is easy to see that every scheme-theoretic fiber of $\nu$ is either a point or empty. 
Hence, by  \cite{EGA}, Proposition 17.2.6, $\nu$ is a monomorphism. 

For a $W(k)$-scheme $\bold X$ and a closed point $x\in \bold X$ over $k$ or $\overline K$, denote by $\widehat{\bold X}_x$ the formal neighborhood of $\bold X$. 
Namely, if $R$ is a local Artinian $W(k)$-algebra with residue field $k$, respectively $\overline K$, then $\widehat{\bold X}_x(R)$ is the set of homomorphisms 
${\mathcal O}(\bold X)\to R$ which lift $x$. 

Let $\bold Y\subset \bold V$ is a closed $\bold G$-invariant subscheme such that $\nu$ factors through a morphism $\mu: \bold G/\bold S\to \bold Y$. 

\begin{proposition}\label{ag3} Suppose that 

(i) for any point $g$ of $\bold G/\bold S$ over $k$ or $\overline K$, 
the morphism of formal neighborhoods $\mu_g: \widehat{\bold G/\bold S}_g\to \widehat{\bold Y}_{\mu(g)}$ induced by $\mu$ 
is an isomorphism; and 

(ii)  $\bold Y$ consists of finitely many closed $\bold G$-orbits both over $k$ and over $\overline K$. 

Then $\mu$ is a closed embedding. In particular, the morphisms $\mu,\nu,\phi$ are finite.   
\end{proposition}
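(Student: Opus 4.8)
The plan is to show that $\mu$ is an \'etale monomorphism, hence an open immersion, and then that its image is clopen in $\bold Y$.

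First, $\mu$ is a monomorphism: $\nu$ is one (as already noted, by \cite{EGA}, Proposition 17.2.6), and $\bold Y\hookrightarrow\bold V$ is a monomorphism, so the factor $\mu$ is one too. By hypothesis (i), $\mu$ induces an isomorphism on complete local rings at every closed point of $\bold G/\bold S$ and of $(\bold G/\bold S)_{\overline K}$; by faithfully flat descent of flatness through the completion map, $\mu$ is flat at each of these points, and, being a monomorphism, it is also unramified there, hence \'etale there. Since the \'etale locus is open and the fibres $(\bold G/\bold S)_k$ and $(\bold G/\bold S)_{\overline K}$ are Jacobson (finite type over a field), this locus contains the whole of the corresponding fibre; descending from $\overline K$ to $K$, we conclude that $\mu$ is \'etale. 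An \'etale monomorphism is an open immersion, so $\mu$ identifies $\bold G/\bold S$ with an open subscheme $U\subseteq\bold Y$. Note also that $\bold G/\bold S$, being a finite \'etale quotient of $\bold G$ (Lemma \ref{finetale}), is smooth and flat over $W(k)$ with irreducible fibres, hence integral and connected; in particular so is $U$.

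Now I use (ii). Since $\bold Y_k$ is a finite union of closed $\bold G$-orbits, each such orbit is also open (its complement being the union of the others), so $\bold Y_k$ is the disjoint union of its orbits, each of which is connected, being the image of the connected group $\bold G_k$; likewise for $\bold Y_{\overline K}$. As $U$ is $\bold G$-stable and connected with connected fibres, $U_k$ and $U_{\overline K}$ are each a single such orbit, so $U$ is closed in each fibre of $\bold Y$. I may then replace $\bold Y$ by the schematic closure of $U$ in $\bold Y$: this is an integral affine $\bold G$-stable scheme, flat over $W(k)$ (its ring of functions embeds into the $p$-torsion-free ring $\mathcal O(U)$), containing $U$ as a dense open subscheme, and closed in $\bold V$; it still satisfies (i) (since by (i) the original $\bold Y$ is regular along $U$) and (ii) (its special fibre is a $\bold G$-stable closed subset of the old $\bold Y_k$, hence a union of closed orbits, and its generic fibre is $\overline{U}_K=U_K$ because $U_K$ is closed). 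So I may assume $\bold Y$ is integral, flat over $W(k)$, and closed in $\bold V$, with $\bold Y_K=U_K$ a single closed orbit and $U$ dense.

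The remaining and main point is to show $U=\bold Y$, i.e. that the special fibre $\bold Y_k$ — which is a disjoint union of closed $\bold G$-orbits one of which is $U_k$ — equals $U_k$. This is where the reductivity of $\bold G$ and the flatness of $\bold G/\bold S$ over $W(k)$ are essential: without reductivity, an affine flat $W(k)$-model of a homogeneous space can have a disconnected special fibre, whereas here the closed orbits occurring are affine homogeneous spaces and $\bold S$ is the unique $W(k)$-lift of $\bold S_0$, and one shows that the schematic closure of the generic orbit cannot acquire additional orbits in its special fibre, so that $\bold Y_k$ is connected and therefore the single orbit $U_k$. Granting this, $\bold Y_k=U_k$, and since $\bold Y_K=U_K$ the open set $U$ has the same underlying points as $\bold Y$, so $U=\bold Y$; thus $\mu$ is an isomorphism onto the closed subscheme $\overline{U}$ of the original $\bold Y$, in particular a closed immersion. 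Since $\overline{U}$ is closed in $\bold V$, $\nu$ is then a closed immersion, and $\phi=\nu\circ\pi$ is finite because $\pi$ is finite; hence $\mu$, $\nu$, $\phi$ are all finite.
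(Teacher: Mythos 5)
Your argument is sound up to its last step, but that last step---``one shows that the schematic closure of the generic orbit cannot acquire additional orbits in its special fibre''---is asserted rather than proved, and it is precisely the entire content of the proposition. Everything before it (monomorphism, \'etale via (i), hence open immersion onto an open $U\subseteq\bold Y$ with $U_k$ and $U_{\overline K}$ single clopen orbits, reduction to the schematic closure $\overline U$) is soft and essentially parallels the paper's Lemmas \ref{sur} and \ref{eta}. The paper's Remark \ref{finiteness} exhibits a $\Bbb G_a$-example in which exactly your setup occurs: $\mu$ is an open immersion onto $U$, $U_k$ is a single closed orbit, $\bold Y$ is integral and flat over $W(k)$ with $\bold Y_K=U_K$ a single closed orbit, and yet $\bold Y=\overline U$ has \emph{two} orbits in its special fibre, one of which is not in $U$. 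So your claim is genuinely false without a real input from reductivity, and you have identified where that input is needed without supplying it.

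The missing mechanism is the one the paper uses: by Haboush's theorem \cite{Ha} there is a $\bold G$-invariant $b_0\in\mathcal O(\bold V_k)$ with $b_0(v^0)=0$ and $b_0=1$ on the other orbits of $\bold Y_k$, and by power reductivity (Proposition \ref{FKprop}) some power $b_0^N$ lifts to a $\bold G$-invariant $b\in\mathcal O(\bold V)$ over $W(k)$, which after subtracting the constant $b(v)\in I$ may be taken to satisfy $b(v)=0$. Such a $b$ vanishes on the $\bold G$-orbit of $v$, hence on $\overline U$, while its reduction is a unit on every other orbit of $\bold Y_k$; this is what excludes those orbits from $\overline U_k$ and forces $\overline U_k=U_k$ (together with a second invariant $c$, or your prior reduction to $\bold Y_K=U_K$, to handle the generic fibre). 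In the $\Bbb G_a$-example this lifting step fails---the separating invariant $b_0$ exists over $k$ but no power of it lifts to a $\bold G$-invariant over $W(k)$---which is why the conclusion fails there. Until you insert this construction (or an equivalent use of geometric/power reductivity over $W(k)$), the proof has a gap at its decisive point.
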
 

\begin{proof} 
Let us cut down the target of $\mu$. Pick a $\bold G$-invariant polynomial $b_0\in \mathcal \O(\bold V_k)$ such that $b_0(v^0)=0$ 
and $b_0=1$ on all the other orbits of $\bold G$ in $\bold Y_k$. Since by (ii), $\bold Y_k$ consists of finitely many closed $\bold G$-orbits, such $b_0$ exists by Haboush's theorem, \cite{Ha}. 

Now use Proposition \ref{FKprop} to lift some power $b_0^N$ of $b_0$ to a $\bold G$-invariant polynomial $b\in {\mathcal O}(\bold V)$ such that $b(v)=0$ 
(namely, choose any lift $b$ of $b_0^N$ and then replace $b$ with $b-b(v)$). 
Also, consider a polynomial $c'\in {\mathcal O}(\bold V_K)$ such that $c'(v)=0$ but $c'\ne 0$ on all other orbits of $\bold G$ on $\bold Y_{\overline{K}}$. Since 
by (ii), $\bold Y_{\overline{K}}$ is a union of finitely many closed $\bold G$-orbits, $c'$ exists, and by setting $c'=p^Mc$ for sufficiently large $M\in \Bbb Z_+$, we obtain a polynomial $c\in {\mathcal O}(V)$. 

Now consider the closed subscheme $\bold Z\subset \bold Y$ cut out by the equations  $b=0, c=0$. Then the morphism
$\mu$ factors through a monomorphism $\bar\mu: \bold G/\bold S\to \bold Z$, 
and it suffices to show that this morphism is a closed embedding. 
We will do so by showing that, in fact, $\bar\mu$ is an isomorphism. 

\begin{lemma}\label{sur} $\bar\mu$ is surjective. 
\end{lemma} 

\begin{proof} This follows since by construction, $\bold Z$ has only one $\bold G$-orbit both over $k$ and over $\overline{K}$ 
(namely, that of $v^0$, respectively $v$).  
\end{proof} 

\begin{lemma}\label{eta} $\bar\mu$ is \'etale.
\end{lemma} 

\begin{proof} By (i), $\bar\mu$ is a formally \'etale morphism between affine $W(k)$-schemes of finite type. This implies the statement.  
\end{proof} 

Now the proposition follows from Lemma \ref{sur} and Lemma \ref{eta}, since a surjective \'etale monomorphism is an isomorphism,
\cite{EGA}, Theorem 17.9.1.
\end{proof} 
 
 \begin{remark}\label{finiteness} The reductivity of $\bold G$ is essential in Proposition \ref{ag3}. 
Namely, let $p>2$ and consider the action of $\bold G=\Bbb G_a$ by translations $x\mapsto x+b$ 
on the 3-dimensional space $\bold V$ of quadratic polynomials in $x$. Let $v=x-px^2$. 
Then $b\circ v=x+b-p(x+b)^2=b-pb^2+(1-2pb)x-px^2$. Thus, the map $\phi: \bold G\to \bold V$, $\phi(g)=gv$,
is given by $$\phi(b)=(b-pb^2,1-2pb,-p).$$ We have 
$$
\mathcal O(\phi^{-1}(0,-1,-p))=W(k)[b]/(b-pb^2,2-2pb)=W(k)[1/p]=K
$$ 
(as $pb=1$ in this ring). This implies that $\phi$ is not finite (as $K$ is not a finitely generated $W(k)$-module), even though it is finite over $\overline K$ and over $W(k)/I^N$ for each $N$.  

We note that in this example the orbits of $v$ over $\overline K$ and its reduction $v^0$ over $k$ are closed, and 
the scheme-theoretic stabilizers $\bold S$ of $v$ and $\bold S_0$ of $v^0$ are both trivial. Also, 
one may take $\bold Y$ to be the curve consisting of polynomials $-p x^2+\alpha x+\beta$ such that
$4p\beta=1-\alpha^2$. This curve is $\bold G$-invariant, and the map $\mu: \bold G=\bold G/\bold S\to \bold Y$ is a monomorphism which induces an isomorphism 
on formal neighborhoods. However, $\bold Y_k$ has two orbits, $\alpha=1$ and $\alpha=-1$, and $\mu$ lands in the first one. 
We have a $\bold G$-invariant polynomial $b_0$ on $\bold V_k$ separating these orbits, namely $b_0=(1-\alpha)/2$. 
But power reductivity does not apply since $\bold G$ is not reductive, and no power of $b_0$ lifts to a $\bold G$-invariant in ${\mathcal O}(\bold Y)$, since 
$\bold Y_{\overline K}$ has only one $\bold G$-orbit, so the only invariant regular functions on $\bold Y$ are constants. As a result, 
we cannot define a closed subscheme $\bold Z\subset \bold Y$ such that $\mu$ factors through a {\it surjective} morphism $\bar\mu: \bold G/\bold S\to \bold Z$.      
\end{remark}

\section{Faithfulness of the lifting for Hopf algebras} 

\subsection{Faithfulness of the lifting} 

If $H$ is a semisimple cosemisimple Hopf algebra over $k$, let $\widetilde H$ denote
its lift over $W(k)$ constructed in \cite{EG}, Theorem 2.1, and let  $\widehat H:=\overline{K}\otimes_{W(k)}\widetilde H$. 

Let $H_1,H_2$ be semisimple cosemisimple Hopf algebras over $k$. 

\begin{theorem}\label{t1}  (\cite{ENO}, Corollary 9.10) 
If $\widehat{H_1}$ is isomorphic to $\widehat{H_2}$ then $H_1$ is isomorphic to $H_2$. 
\end{theorem}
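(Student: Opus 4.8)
The plan is to reduce the statement to Proposition~\ref{ag} by realizing Hopf algebra structures on a fixed vector space as points of an affine space on which a reductive group acts, with isomorphism classes corresponding to orbits. First I would fix the dimension $n=\dim_k H_1=\dim_k H_2$ (this is an isomorphism invariant, and it is preserved by lifting since $\widetilde H$ is free of rank $n$ over $W(k)$, so $\dim_{\overline K}\widehat H_i = n$ as well). On the free $W(k)$-module $U=W(k)^n$ consider the affine space $\bold V$ parametrizing tuples $(m,\Delta,\eta,\eps,S)$ of a multiplication, comultiplication, unit, counit and antipode; impose the (closed, $W(k)$-defined) conditions that make such a tuple a Hopf algebra structure, so that Hopf algebra structures on $U$ correspond to $W(k)$-points of a closed subscheme, and let $\bold G=GL(U)=GL_n$ act by transport of structure. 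Then two Hopf algebra structures are isomorphic precisely when their defining points lie in the same $\bold G$-orbit.

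The key point is that semisimple cosemisimple Hopf algebras correspond to points with \emph{closed} $\bold G$-orbits, and that non-isomorphic ones give \emph{disjoint} closed orbits. For this I would invoke the theory already developed in \cite{EG}, \cite{ENO}: over any field, the variety of semisimple cosemisimple Hopf algebra structures of a given dimension is (a union of) closed $GL_n$-orbits — intuitively because semisimplicity and cosemisimplicity are rigidity conditions forcing the stabilizer (the automorphism group scheme) to be linearly reductive and the orbit to be affine/closed, via a Matsushima-type criterion or directly via the existence of separability idempotents that vary algebraically. So: let $v_1,v_2\in\bold V(W(k))$ be the points corresponding to the lifts $\widetilde H_1,\widetilde H_2$ constructed in \cite{EG}, Theorem~2.1; their reductions modulo $p$ are (isomorphic to) the points corresponding to $H_1,H_2$ themselves, because the lift of \cite{EG} reduces mod $p$ to the original Hopf algebra. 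Since $H_1,H_2$ are semisimple cosemisimple, the $\bold G$-orbits of $v_1^0,v_2^0$ are closed in $\bold V(k)$.

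Now suppose $H_1\not\cong H_2$. Then the orbits of $v_1^0$ and $v_2^0$ are disjoint closed orbits in $\bold V_k$, so by Proposition~\ref{ag} (with $\bold G=GL_n$, which is split reductive over $W(k)$) the points $v_1,v_2$ are not conjugate under $\bold G(\overline K)=GL_n(\overline K)$. But a conjugating element would be exactly an isomorphism of Hopf algebras $\widehat H_1\cong\widehat H_2$ over $\overline K$; hence $\widehat H_1\not\cong\widehat H_2$, contradiction. Therefore $H_1\cong H_2$.

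I expect the main obstacle to be the step asserting that semisimple cosemisimple Hopf algebra structures of a fixed dimension form closed $GL_n$-orbits, and that the \cite{EG} lift reduces correctly mod $p$ and lands in such an orbit — i.e.\ setting up the moduli picture so that Proposition~\ref{ag} applies cleanly, in particular verifying the \emph{closedness} hypothesis. (The disjointness is automatic from $H_1\not\cong H_2$ once orbits $\leftrightarrow$ isomorphism classes is established; and one should double-check that a subtlety like choosing a common underlying module $U$, or passing between $\bold S$-orbits over $k$ versus $\bar k$, causes no trouble — here $k$ is algebraically closed so it does not.) This closedness input is presumably already available from \cite{EG}/\cite{ENO}, so the proof in the paper is likely short, citing those results and then applying Proposition~\ref{ag}.
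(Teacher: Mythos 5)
Your proposal follows essentially the same route as the paper: realize the lifted Hopf structures as points $v_1,v_2$ of an affine space of structure tensors on $W(k)^d$ with $\bold G=GL(d)$ acting by transport of structure, observe that semisimple cosemisimple structures have closed $\bold G$-orbits over $k$, and conclude via Proposition~\ref{ag}. The only caveat is your heuristic for the closedness step: a Matsushima-type criterion would at best give that the orbit is affine, not that it is closed (cf.\ the $\Bbb G_m$-example after Proposition~\ref{ag}, where the stabilizer is trivial yet the orbit is not closed); the paper's Lemma~\ref{clo} instead uses that $\Tr(\Bbb S^2)=d\ne 0$ is a $\bold G$-invariant regular function, hence constant on the orbit closure, so by Larson--Radford every boundary point is again a semisimple cosemisimple Hopf algebra, has finite automorphism group by \cite{EG}, and the resulting dimension count forces the closure to coincide with the orbit.
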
  

\begin{proof}
By the assumption, $\dim H_1=\dim H_2=d$, a number coprime to $p$. Indeed, by Theorem 3.1 of \cite{EG} in a semisimple cosemisimple Hopf algebra we have $\Bbb S^2=1$, where $\Bbb S$ is the antipode, and by the Larson-Radford theorem (\cite{LR}), $\Tr(\Bbb S^2)\ne 0$. 

Fix identifications $\widetilde{H_1}\cong W(k)^d$, $\widetilde{H_2}\cong W(k)^d$ as $W(k)$-modules; they, in particular, define 
identifications $H_1\cong k^d$, $H_2\cong k^d$ as vector spaces over $k$.  

Let 
$\bold V$ be the space of all possible pre-Hopf structures, i.e., product, coproduct, unit, counit, 
and antipode maps on the $d$-dimensional space, without any axioms, regarded as an affine scheme. 
In other words, if $\bold E=\Bbb A^d$ is the defining representation of $\bold G$, then 
$$
\bold V=\bold E\bigotimes \bold E\bigotimes \bold E^*\oplus \bold E\bigotimes \bold E^*\bigotimes \bold E^*\oplus \bold E\oplus \bold E^*\oplus \bold E\bigotimes \bold E^*.
$$ 
Then $\bold V$ is a rational representation of $\bold G=GL(d)$, 
and $\widetilde{H_1},\widetilde{H_2}$ together with the above identifications 
give rise to two vectors $v_1,v_2\in \bold V(W(k))$, while $H_1,H_2$ correspond to their reductions mod $I$, $v_1^0,v_2^0\in \bold V(k)$. 
Moreover, by the assumption of the theorem, $v_1,v_2$ are conjugate under the action of $\bold G(\overline{K})$. 
We are going to show that the reductions $v_1^0,v_2^0$ are conjugate under the action of $\bold G(k)$, i.e. $H_1\cong H_2$, as claimed. 

Let $H\cong k^d$ be a semisimple cosemisimple Hopf algebra over $k$, and $u$ be the corresponding vector in $\bold V(k)$. 

\begin{lemma}\label{clo} (\cite{ENO}, Section 9) 
The $\bold G$-orbit $\bold Gu$ of $u$ is closed.
\end{lemma} 

\begin{proof} 
Let $u'\in \overline{\bold G u}$. 
Then $u'$ corresponds to a $d$-dimensional Hopf algebra $H'$ such that $\Tr|_{H'}(\Bbb S^2)=d\ne 0$, since this is so for all points of $\bold Gu$. Hence $H'$ is semisimple and cosemisimple by the 
Larson-Radford theorem (\cite{LR}). But then the stabilizers of $u,u'$ in $\bold G(k)$, which are isomorphic to ${\rm Aut}(H), {\rm Aut}(H')$, are finite, see \cite{EG}, Corollary 1.3. 
Hence, $\dim \bold Gu'=\dim \bold Gu=\dim \bold G$. This implies that $u'\in \bold Gu$. Hence, $\bold Gu$ is closed.   
\end{proof} 
 
Thus, the orbits of $v_1^0$, $v_2^0$ are closed. Hence, Theorem \ref{t1} follows from Proposition \ref{ag}. 
\end{proof} 

\subsection{Integrality of the stabilizer} 

\begin{theorem}\label{t2} Let $H_1,H_2$ be semisimple cosemisimple Hopf algebras over $k$, and 
$g: \widehat H_1\to \widehat H_2$ be an isomorphism. Then $g$ maps $\widetilde H_1$ isomorphically to $\widetilde H_2$, i.e., it is a lift of an isomorphism $g_0: H_1\to H_2$ over $W(k)$. 
In particular, for a semisimple cosemisimple Hopf algebra $H$ over $k$, 
the lifting map $i: {\rm Aut}(H)\to {\rm Aut}(\widehat{H})$ defined in \cite{EG}, Theorem 2.2
is an isomorphism.
\end{theorem}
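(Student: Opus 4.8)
The plan is to obtain this as a direct application of Proposition \ref{ag2}, in the same geometric setup used for Theorem \ref{t1}, and then to bootstrap the general statement from the statement about automorphism groups.

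I would first prove the last assertion, that $i\colon {\rm Aut}(H)\to {\rm Aut}(\widehat H)$ is an isomorphism. Fix an identification $\widetilde H\cong W(k)^d$ of $W(k)$-modules (hence also $H\cong k^d$), and, as in the proof of Theorem \ref{t1}, let $\bold G=GL(d)$ act on the affine space $\bold V$ of pre-Hopf structures on the $d$-dimensional space. Let $v\in\bold V(W(k))$ be the structure of $\widetilde H$ and $v^0\in\bold V(k)$ its reduction, the structure of $H$. Then the scheme-theoretic stabilizer $\bold S\subset\bold G$ of $v$ satisfies $S=\bold S(\overline K)={\rm Aut}(\widehat H)$, and $\bold S_0=\bold S_k$ satisfies $S_0=\bold S_0(k)={\rm Aut}(H)$. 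Now I verify the hypotheses of Proposition \ref{ag2}: (i) the orbit $\bold Gv^0$ is closed by Lemma \ref{clo}; (ii) $\bold S_0$ is finite by \cite{EG}, Corollary 1.3, and reduced (equivalently, \'etale over $k$) since a semisimple cosemisimple Hopf algebra admits no nonzero Hopf algebra derivation, i.e.\ no nonzero derivation that is simultaneously a coderivation, so $\mathrm{Lie}(\bold S_0)=0$; (iii) the lifting map $i$ of \cite{EG}, Theorem 2.2, takes $S_0$ into $\bold G(W(k))$ (as $i(g_0)\in{\rm Aut}_{W(k)}(\widetilde H)\subset GL_d(W(k))$) and $i(g_0)$ reduces to $g_0$ mod $p$ by construction. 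Proposition \ref{ag2} then yields that $i$ is an isomorphism.

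For the main statement, let $g\colon\widehat H_1\to\widehat H_2$ be an isomorphism. By Theorem \ref{t1}, $H_1\cong H_2$, so by uniqueness of the lift (\cite{EG}, Theorem 2.1) there is a $W(k)$-Hopf-algebra isomorphism $\psi\colon\widetilde H_1\to\widetilde H_2$; let $\widehat\psi\colon \widehat H_1\to\widehat H_2$ be its base change to $\overline K$. Then $\widehat\psi^{-1}\circ g\in{\rm Aut}(\widehat H_1)$, so by the previous paragraph it equals $i(g_0')$ for some $g_0'\in{\rm Aut}(H_1)$; in particular it is the base change of a $W(k)$-automorphism of $\widetilde H_1$, hence carries the submodule $\widetilde H_1\subset\widehat H_1$ onto itself. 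Therefore $g=\widehat\psi\circ(\widehat\psi^{-1}\circ g)$ carries $\widetilde H_1$ isomorphically onto $\widetilde H_2$, and reducing $g|_{\widetilde H_1}$ mod $p$ gives the required isomorphism $g_0\colon H_1\to H_2$ of which $g$ is a lift.

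I expect the main obstacle to be hypothesis (ii) of Proposition \ref{ag2}, namely the reducedness of the automorphism group scheme $\bold S_0$ of $H$: finiteness of the group ${\rm Aut}(H)$ is already recorded in \cite{EG}, but one additionally needs the absence of infinitesimal automorphisms, which is the vanishing of $\mathrm{Lie}(\bold S_0)$, i.e.\ that no nonzero derivation of $H$ is simultaneously a coderivation. This is a rigidity property of semisimple cosemisimple Hopf algebras that I would extract from (or along the lines of) \cite{EG}. Everything else is a routine matter of transporting the setup of Section 2 to the concrete space of pre-Hopf structures, which was already carried out in the proof of Theorem \ref{t1}.
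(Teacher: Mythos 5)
Your proposal is correct and follows essentially the same route as the paper: reduce to the case $H_1=H_2$ via Theorem \ref{t1} and the uniqueness of the lift, then apply Proposition \ref{ag2} to the stabilizer of the vector $v\in\bold V(W(k))$ representing $\widetilde H$, with closedness of the orbit from Lemma \ref{clo}, finiteness and reducedness of $\bold S_0$ from \cite{EG} (Theorem 1.2 and Corollary 1.3 — Theorem 1.2 is exactly the vanishing of infinitesimal automorphisms you flag as the main point), and the lifting map from \cite{EG}, Theorem 2.2. The only difference is cosmetic: you prove the automorphism statement first and deduce the general one, while the paper reduces to $H_1=H_2$ at the outset.
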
 

\begin{proof} By Theorem \ref{t1}, we may assume that $H_1=H_2=H$. 
Let $v\in \bold V(W(k))$ be the vector corresponding to $\widetilde H$, and $v^0\in \bold V(k)$ be its
reduction mod $I$, corresponding to $H$. Let $S:={\rm Aut}(\widehat{H})\subset G(\overline{K})$ be the stabilizer of $v$, 
$S_0:=\Aut(H)\subset \bold G(k)$ be the stabilizer of $v^0$, and $\bold S_0$ be the scheme-theoretic stabilizer of $v^0$. 

By Lemma \ref{clo}, the $\bold G$-orbit of $v^0$ is closed. Also, by \cite{EG}, Theorem 1.2, Corollary 1.3, $\bold S_0$ is finite and reduced. 
Finally, by \cite{EG}, Theorem 2.2, we have a lifting map $i: S_0\hookrightarrow S$ such that for any $g_0\in S_0$, $i(g_0)$ is integral, and the reduction modulo $p$ of $i(g_0)$ equals $g_0$. 
Thus, Proposition \ref{ag2}  applies, and result follows. 
\end{proof} 

\begin{remark} Theorems \ref{t1}, \ref{t2} also hold for quasitriangular and triangular Hopf algebras, with the same proofs. 
\end{remark} 

\subsection{Finiteness of the orbit map} 

We would now like to prove an analog of Lemma 9.7 of \cite{ENO} in the Hopf algebra setting. 

\begin{theorem}\label{t3} Let $H$ be a semisimple cosemisimple Hopf algebra over $k$, and $v\in \bold V(W(k))$ 
be the vector corresponding to $\widetilde{H}$. Then the morphism $\phi: \bold G\to \bold V$ defined by 
$\phi(g)=gv$ is finite. 
\end{theorem}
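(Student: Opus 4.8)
The plan is to deduce the finiteness of $\phi$ from Proposition~\ref{ag3}. I retain the notation of the proof of Theorem~\ref{t2}: $\bold G=GL(d)$ with $d=\dim H$, $\bold V$ the space of pre-Hopf structures on $\bold E=\Bbb A^d$, $v\in\bold V(W(k))$ the vector attached to $\widetilde H$ and $v^0\in\bold V(k)$ its reduction (the vector of $H$), and $\bold S\subset\bold G$, $\bold S_0=\bold S_k$ the scheme-theoretic stabilizers of $v$ and $v^0$. By Lemma~\ref{clo} the $\bold G$-orbit of $v^0$ is closed, and by the ingredients of Theorem~\ref{t2} (see \cite{EG}, Theorems 1.2, 2.2 and Corollary 1.3) the group scheme $\bold S_0$ is finite and reduced and there is a compatible lifting map $i\colon S_0\to S$; hence the hypotheses of Proposition~\ref{ag2} hold, so by Lemma~\ref{finetale} the scheme $\bold S$ is finite \'etale over $W(k)$, $\pi\colon\bold G\to\bold G/\bold S$ is finite \'etale, $\bold G/\bold S$ is smooth over $W(k)$ of relative dimension $d^2$, and $\phi=\nu\circ\pi$ with $\nu\colon\bold G/\bold S\to\bold V$ a monomorphism all of whose fibers are points or empty. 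As the target subscheme I would take $\bold Y\subset\bold V$ to be the scheme-theoretic image of $\phi$, equivalently the scheme-theoretic closure of the $\bold G$-orbit of $v$: then $\bold Y$ is a closed $\bold G$-invariant subscheme of $\bold V$, it is flat over the discrete valuation ring $W(k)$ (the scheme-theoretic image of the $W(k)$-flat scheme $\bold G$ is $p$-torsion free), and $\phi$ --- hence also $\nu$, by faithfully flat descent along $\pi$ --- factors through a monomorphism $\mu\colon\bold G/\bold S\to\bold Y$. It remains to verify conditions (i) and (ii) of Proposition~\ref{ag3}.

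For (ii): since $W(k)$ is a discrete valuation ring and $\bold Y$ is $W(k)$-flat, $\bold Y$ is the scheme-theoretic closure of its generic fiber, and $\bold Y_{\overline K}$ --- which, as scheme-theoretic image commutes with the flat base change $W(k)\to\overline K$, is the scheme-theoretic image of $\phi_{\overline K}$, that is, the single orbit $\bold G_{\overline K}v$ --- is closed by the argument of Lemma~\ref{clo} applied to the semisimple cosemisimple Hopf algebra $\widehat H$ over $\overline K$. Every point of $\bold Y_{\overline K}$ is thus a Hopf structure with $\Tr(\Bbb S^2)=d$, and since these are closed conditions they hold on all of $\bold Y$. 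Consequently every point of $\bold Y$ over $k$ or $\overline K$ corresponds to a $d$-dimensional Hopf algebra $H'$ with $\Tr_{H'}(\Bbb S^2)=d\ne 0$, so by the Larson--Radford theorem \cite{LR} $H'$ is semisimple and cosemisimple, its automorphism group is finite (\cite{EG}, Corollary 1.3), and hence its $\bold G$-orbit has dimension $d^2$ and is closed, again by Lemma~\ref{clo}. Since $\bold Y_{\overline K}$ is a single orbit and $\dim\bold Y_k=d^2$ (flatness over $W(k)$ and $\dim\bold G_{\overline K}v=d^2$), each $\bold G$-orbit in $\bold Y_k$ is a closed, irreducible subset of dimension $d^2$, hence an irreducible component of the Noetherian scheme $\bold Y_k$; so $\bold Y_k$ is a finite union of closed $\bold G$-orbits. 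This is (ii).

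For (i): let $g$ be a point of $\bold G/\bold S$ over $k$ or $\overline K$, and set $u'=\mu(g)$; lifting $g$ to $\bold G$ shows $u'$ lies on the orbit $\bold G v^0\subset\bold Y_k$, respectively on $\bold G v=\bold Y_{\overline K}$. By the previous paragraph this orbit is closed and, being disjoint from the finitely many other orbits, also open in the corresponding fiber of $\bold Y$; as it is a homogeneous space under $\bold G$ with finite reduced stabilizer (using reducedness of the stabilizers, as in the proof of Proposition~\ref{ag2}), the fiber of $\bold Y$ through $u'$ is smooth of dimension $d^2$ at $u'$. Since $\bold Y$ is $W(k)$-flat with a fiber that is smooth of dimension $d^2$ at $u'$, $\bold Y$ is smooth over $W(k)$ at $u'$ of relative dimension $d^2$. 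Thus near $g$ and $u'$ both $\bold G/\bold S$ and $\bold Y$ are smooth over $W(k)$ of relative dimension $d^2$; being a monomorphism, $\mu$ is unramified and its fibers are points or empty, so by the local flatness criterion (a morphism between regular schemes of equal dimension with zero-dimensional fibers is flat) $\mu$ is flat, hence \'etale, at $g$, and since the residue fields at $g$ and $u'$ coincide it induces an isomorphism $\widehat{\bold G/\bold S}_g\cong\widehat{\bold Y}_{u'}$ on formal neighborhoods. This is (i).

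Now Proposition~\ref{ag3} applies and gives that $\mu$ is a closed embedding and that $\mu$, $\nu$ and $\phi$ are finite; in particular $\phi$ is finite, which is the assertion. I expect the crux to be condition (i), and within it the claim that $\bold Y$ is smooth over $W(k)$ near the image of $\mu$: flatness is automatic from the choice of $\bold Y$ as a scheme-theoretic image, but the fiberwise smoothness rests essentially on the closedness of the $\bold G$-orbits of $v$ and $v^0$ (Lemma~\ref{clo}), which forces the fibers of $\bold Y$ near these orbits to be homogeneous spaces with nothing else intruding. Reductivity of $\bold G$ is used both here (through Lemma~\ref{clo} and the finiteness of the stabilizers) and, decisively, inside Proposition~\ref{ag3} via power reductivity --- the ingredient absent from \cite{ENO}.
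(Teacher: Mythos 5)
Your overall architecture matches the paper's: factor $\phi=\nu\circ\pi$ with $\pi$ finite \'etale (via Theorem \ref{t2} and Lemma \ref{finetale}), choose a closed $\bold G$-invariant $\bold Y$ through which $\nu$ factors, and invoke Proposition \ref{ag3}. Your verification of condition (ii) is a pleasant variant: by taking $\bold Y$ to be the ($W(k)$-flat) scheme-theoretic image of $\phi$ and counting dimensions of irreducible components of $\bold Y_k$, you avoid Stefan's finiteness theorem, which the paper uses for its larger $\bold Y$ (all Hopf structures with $\Tr(\Bbb S^2)=d$). That part is correct.

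However, there is a genuine gap in your verification of condition (i) at points over $k$. You assert that ``the fiber of $\bold Y$ through $u'$ is smooth of dimension $d^2$ at $u'$'' because the orbit $\bold G_k v^0$ is a smooth homogeneous space that is open and closed in $\bold Y_k$. But that only shows $(\bold Y_k)_{\mathrm{red}}$ is smooth there: the special fiber of the scheme-theoretic image of a $W(k)$-flat scheme can perfectly well be non-reduced (e.g.\ $\mathrm{Spec}\,W(k)[x]/(x^2-p)$ is integral and flat with non-reduced special fiber), and nothing in your argument rules out nilpotents in $\bold Y_k$ along the orbit. Since $\mu$ is a monomorphism, the map on completed local rings $\widehat{\mathcal O}_{\bold Y,u'}\to\widehat{\mathcal O}_{\bold G/\bold S,g}$ is automatically surjective, so condition (i) is \emph{equivalent} to $\bold Y_k$ being reduced (hence smooth) at $u'$ --- i.e.\ the step you assert without proof is not a removable convenience but the entire content of (i). This is exactly where the paper inputs the non-formal ingredient: by \cite{EG}, Theorem 2.2, every formal deformation of the Hopf structure $v^0$ over an Artinian local ring is obtained from the $\bold G$-action, which is what forces $\widehat{\bold G/\bold S}_g\to\widehat{\bold Y}_{\mu(g)}$ to be an isomorphism (for the paper's $\bold Y$, hence a fortiori for yours, which is a closed subscheme of it). A pure dimension/miracle-flatness count cannot substitute for this deformation-theoretic rigidity; you should cite \cite{EG}, Theorem 2.2 at this point, after which the rest of your argument goes through.
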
  

\begin{proof} Let $\bold S$ be the scheme-theoretic stabilizer of $v$. Theorem \ref{t2} and Lemma \ref{finetale} imply 
that $\bold S$ is the lift of $\bold S_0$ over $W(k)$, and the tautological morphism $\pi: \bold G\to \bold G/\bold S$ is finite \'etale. We also have a natural morphism $\nu: \bold G/\bold S\to \bold V$ induced by $\phi$, such that 
$\phi=\nu\circ \pi$. 

Let $\bold Y\subset \bold V$ be the closed subscheme of Hopf algebra structures with $\Tr(\Bbb S^2)=d$.
Then $\bold Y$ is a closed $\bold G$-invariant subscheme, and $\nu$ factors through a morphism $\mu: \bold G/\bold S\to \bold Y$.
By the Larson-Radford theorem, \cite{LR}, $d\ne 0$ in $k$, so any Hopf algebra of dimension $d$ over $k$ or $\overline K$ is semisimple and cosemisimple. 
Hence, by Stefan's theorem \cite{St} (restated in \cite{EG}, Theorem 1.1), $\bold Y$ consists of finitely many orbits both over $k$ and over $\overline{K}$, 
which are closed by Lemma \ref{clo}. Also, it follows from \cite{EG}, Theorem 2.2, that $\mu$ induces an isomorphism on formal neighborhoods. 
Thus, Proposition \ref{ag3} applies, and the statement follows. 
\end{proof} 

\begin{remark} Theorems \ref{t1}, \ref{t2}, \ref{t3} are subsumed by the results of Section 4. 
However, we felt it was useful to give independent direct proofs of these theorems which do not use tensor categories. 
\end{remark} 

\subsection{Fullness of the lifting functor} 
 
Finally, let us prove the following result, which appears to be new. 

\begin{theorem}\label{t3a} Let $H_1,H_2$ be semisimple cosemisimple Hopf algebras over $k$. 
Then any Hopf algebra homomorphism $\theta: \widehat H_1\to \widehat H_2$ is a lifting 
of some homomorphism $\theta_0: H_1\to H_2$. In other words, the lifting functor 
defined by \cite{EG}, Corollary 2.4, is a fully faithful embedding from the category of semisimple cosemisimple Hopf algebras over $k$ 
to the category of semisimple (thus, cosemisimple) Hopf algebras over  $\overline{K}$. 
\end{theorem}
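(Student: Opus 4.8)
The plan is to reduce the statement about an arbitrary Hopf algebra homomorphism $\theta\colon \widehat H_1\to\widehat H_2$ to a statement about a single automorphism, so that Theorem \ref{t2} (integrality of the stabilizer) can be applied. The standard trick is to replace a morphism by its graph. More precisely, I would consider the product Hopf algebra $H:=H_1\times H_2$ over $k$, which is again semisimple and cosemisimple, with lift $\widetilde H=\widetilde{H_1}\times\widetilde{H_2}$ and $\widehat H=\widehat{H_1}\times\widehat{H_2}$. Given $\theta\colon\widehat H_1\to\widehat H_2$, the map
\[
\gamma_\theta\colon \widehat H_1\times \widehat H_2\to \widehat H_1\times\widehat H_2,\qquad (a,b)\mapsto (a,\,b)\ \text{conjugated so as to encode}\ \theta,
\]
does not literally land inside $\mathrm{Aut}(\widehat H)$, so instead I would work with the ``triangular'' endomorphism $e_\theta(a,b)=(a,\theta(a))$, or better, realize $\theta$ through an \emph{idempotent} or through a suitable sub/quotient-Hopf-algebra structure. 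The cleanest route: the image $\mathrm{Im}(\theta)\subseteq\widehat H_2$ is a Hopf subalgebra, and the graph $\{(a,\theta(a)):a\in\widehat H_1\}\subseteq\widehat H$ is a Hopf subalgebra isomorphic to $\widehat H_1$; a Hopf homomorphism is recovered from such a Hopf subalgebra of $\widehat H_1\times\widehat H_2$ satisfying that the first projection is an isomorphism. So fullness of lifting for \emph{homomorphisms} follows from a statement about Hopf \emph{subalgebras} of lifts, which in turn I would package as a fixed-point/stabilizer statement for the $\bold G=GL(d)$-action, $d=\dim H$, on $\bold V$ together with the additional datum of a subspace (a point of a Grassmannian, or an idempotent projector in $\mathrm{End}(\bold E)$ living in the affine space $\bold E\otimes\bold E^*$).

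Concretely, the key steps, in order: (1) Form $H=H_1\times H_2$ and note its lift is the product; fix $W(k)$-bases so that $\widetilde H\cong W(k)^d$. (2) Enlarge the scheme $\bold V$ to $\bold V':=\bold V\oplus(\bold E\otimes\bold E^*)$, where the new coordinate records a linear projector $e$; impose the closed $\bold G$-invariant conditions $e^2=e$ and that $e$ is a Hopf-algebra idempotent (i.e.\ $\mathrm{Im}(e)$ is a Hopf subalgebra) — these are polynomial conditions. The homomorphism $\theta$ corresponds to the idempotent $e_\theta$ of $\widehat H$ whose image is the graph; this gives a point $v\in\bold V'(W(k))$ for $H$ equipped with $e_\theta$, reducing mod $p$ to $v^0$ built from a homomorphism $\theta_0$? — no, that is exactly what we must prove, so instead: (3) Run the argument of Theorem \ref{t1}/Theorem \ref{t2} with $\bold G$ acting on $\bold V'$: the $\bold G$-orbit of the reduction $v^0$ is closed (same Larson–Radford/finiteness-of-$\mathrm{Aut}$ argument as Lemma \ref{clo}, since the stabilizer of a point of $\bold V'$ is a subgroup of $\mathrm{Aut}(H)$, hence finite and reduced), and Proposition \ref{ag} shows that a point of $\bold V'(W(k))$ conjugate over $\overline K$ to $v^0$-lift is already conjugate over $k$. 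Applying this to $v$ (coming from $\widehat H$ with the idempotent $e_\theta$) versus the lift of the structure coming from $H$ with \emph{any} homomorphism $H_1\to H_2$ — here is where one needs at least one $k$-homomorphism to exist; if none exists, one argues directly that the orbit-closure of $v^0$ contains no point with a Hopf-idempotent of the required ``graph'' type, contradicting the existence of $\theta$. (4) Conclude that $e_\theta$ descends, i.e.\ the graph Hopf subalgebra of $\widehat H$ is the lift of a Hopf subalgebra of $H$ whose first projection is an isomorphism, which is the datum of $\theta_0\colon H_1\to H_2$; and by Theorem \ref{t2} applied to the induced map on $\widehat H$ (or directly) $\theta$ is the lift of $\theta_0$. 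Finally, faithfulness of lifting on morphisms is immediate from injectivity of reduction mod $p$ on $W(k)$-points, so one gets full faithfulness, and semisimplicity of $\widehat H_i$ over $\overline K$ follows as in Theorem \ref{t1} from $\mathrm{Tr}(\mathbb S^2)=\dim H_i\ne 0$.

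I expect the main obstacle to be step (3): phrasing ``$e$ is a Hopf-algebra idempotent with the graph property'' as a \emph{closed} $\bold G$-invariant subscheme of $\bold V'$, and checking that the relevant reduction $v^0$ has closed orbit and that its scheme-theoretic stabilizer is finite and reduced. Closedness of the orbit should again reduce, via the Larson–Radford trace argument, to the fact that \emph{all} points in the orbit closure still carry $\mathrm{Tr}(\mathbb S^2)=d\ne 0$ on the ambient algebra $H$, hence are semisimple cosemisimple, hence have finite automorphism group — but one must also ensure the extra idempotent coordinate stays in the ``graph'' locus under degeneration, which is a closed condition but one should verify it is compatible with the trace bound so that dimension-counting still forces the orbit to be closed. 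A cleaner alternative, avoiding Grassmannians, is to use the product trick in the form: a homomorphism $\theta\colon\widehat H_1\to\widehat H_2$ is the same as a Hopf algebra structure on $\widehat H_1\oplus\widehat H_2$ of a special ``triangular'' shape (semidirect-type), and such shapes form a closed $\mathrm{GL}(2d)$-invariant subscheme; then apply Proposition \ref{ag} and Proposition \ref{ag2} verbatim. Either way, once the right closed invariant subscheme is identified, the proof is a direct transcription of Theorems \ref{t1} and \ref{t2}, and I would present it that way, citing \cite{EG}, \cite{LR}, \cite{St} for the semisimplicity, finiteness, and orbit-structure inputs exactly as in those proofs.
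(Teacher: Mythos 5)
Your proposal takes a genuinely different route from the paper, and the route has a gap that you flag yourself but do not close. The circularity in your step (3) is real: Proposition \ref{ag} compares two points of $\bold V'(W(k))$ whose reductions have closed, disjoint orbits, so to run it you must already possess a $W(k)$-point encoding a homomorphism $H_1\to H_2$ --- which is precisely what is to be produced, and your fallback (``argue directly that the orbit closure contains no point of graph type'') is not an argument. Worse, the graph of $\theta$, hence the idempotent $e_\theta$, is a priori defined only over $\overline K$, not over $W(k)$, so the hypothesis $v\in\bold V'(W(k))$ of Propositions \ref{ag} and \ref{ag2} fails at the outset; establishing integrality of this datum is essentially the content of the theorem, not an input to it. There is also a structural slip: the direct product $H_1\times H_2$ of Hopf algebras is not a Hopf algebra (the coproduct of $(a,b)$ has nowhere to live), and the same objection applies to your ``cleaner alternative'' of a triangular Hopf structure on $\widehat H_1\oplus\widehat H_2$; the categorical product is $H_1\otimes H_2$, where the graph is the image of $(\mathrm{id}\otimes\theta)\circ\Delta$, and recasting your idempotent locus there is additional work the sketch does not supply.

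The paper's proof is much shorter and avoids all of this. The key observation (Lemma \ref{bije}) is that Hopf ideals of a semisimple Hopf algebra correspond to full tensor subcategories of its representation category, hence to fusion subrings of its Grothendieck ring; since $H$ and $\widehat H$ have the same Grothendieck ring, Hopf ideals (and, dually, Hopf subalgebras) of $H$ and of $\widehat H$ are in canonical bijection. One then descends ${\rm Im}\,\theta$ to a Hopf subalgebra $B_0\subset H_2$ and ${\rm Ker}\,\theta$ to a Hopf ideal $J_0\subset H_1$, invokes the Nichols--Zoeller theorem to see that $B_0$ and $H_1/J_0$ have dimension coprime to $p$ and hence remain semisimple and cosemisimple, and thereby factors $\theta$ as a surjection followed by an injection, reducing to the case where $\theta$ is an isomorphism --- which is exactly Theorem \ref{t2}. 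Your instinct to land on Theorem \ref{t2} is correct, but the reduction must go through this descent of kernels and images rather than through a new orbit-closure argument.
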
 

\begin{proof} We first prove the following lemma. 

\begin{lemma}\label{bije} Let $H$ be a semisimple cosemisimple Hopf algebra over $k$. Then lifting 
defines a bijection between Hopf ideals of $H$ and Hopf ideals of $\widehat{H}$. 
The same applies to Hopf subalgebras. 
\end{lemma} 

\begin{proof} 
A Hopf ideal $J\subset A$ of a semisimple Hopf algebra $A$ corresponds to a full tensor subcategory $\C_J\subset \Rep H$ of objects annihilated by $J$, 
and this correspondence is a bijection. Full tensor subcategories, in turn, correspond to fusion subrings of the Grothendieck ring of $\Rep A$. 
So the first statement follows from the fact that the Grothendieck rings of $H$ and $\widehat{H}$ are the same. 
The second statement is dual to the first statement, since the orthogonal complement of a Hopf ideal in $A$ is a 
Hopf subalgebra of $A^*$, and vice versa.  
\end{proof} 

Now let $B={\rm Im}\theta\subset \widehat{H}_2$. Then by Lemma \ref{bije}, $B$ is a lifting of some Hopf subalgebra $B_0\subset H_2$. 
Note that $B_0$ is semisimple, since its dimension divides the dimension of $H_2$ by the Nichols-Zoeller theorem (\cite{NZ}), hence is coprime to $p$. 
Thus, without loss of generality we may replace $H_2$ with $B_0$, i.e., assume that $\theta$ is surjective. 

Now let $J={\rm Ker}\theta\subset \widehat H_1$. Then by Lemma \ref{bije}, $J$ is a lift of some Hopf ideal $J_0\subset H_1$. 
Let $C_0=H_1/J_0$. Then $C_0$ is cosemisimple, as it is a quotient of $H_1$, so by the Nichols-Zoeller theorem,  
its dimension is coprime to $p$. Moreover, $\widehat{C}\cong \widehat{H}_1/J$. Thus, without loss of generality we may replace $H_1$ with $C_0$, i.e., assume that $\theta$ is an isomorphism. 

But in this case the desired statement is Theorem \ref{t2}. 
\end{proof} 

\section{Faithfulness of the lifting for fusion categories} 

\subsection{Faithfulness of the lifting} 

Now we generalize the results of the previous section to separable fusion categories. 
(For basics on tensor categories we refer the reader to \cite{EGNO}.)
Most of the proofs are parallel to the Hopf algebra case, and we will indicate the necessary modifications. 
We will develop the theory for ordinary fusion categories; the case of braided and symmetric categories is completely parallel. 

We call a fusion category $\C$ {\it separable} if its global dimension is nonzero. This is equivalent to the definition of \cite{DSS} by \cite{DSS}, Theorem 3.6.7. 

If $\C$ is a (braided, symmetric) separable fusion category over $k$, let $\widetilde \C$
be its lift to $W(k)$ constructed in \cite{ENO}, Theorem 9.3, Corollary 9.4, and let  $\widehat \C:=\overline{K}\otimes_{W(k)}\widetilde \C$. 

Let $\C_1,\C_2$ be (braided, symmetric) separable fusion categories over $k$.
First we prove the following theorem, which is Corollary 9.9(i) of \cite{ENO}.

\begin{theorem}\label{t4}  
If $\widehat{\C_1}$ is equivalent to $\widehat{\C_2}$ then $\C_1$ is equivalent to $\C_2$. 
\end{theorem}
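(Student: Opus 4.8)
The plan is to mimic the Hopf algebra argument from Theorem \ref{t1} in the categorical setting, encoding a fusion category as a point in an affine $\bold G$-representation and reducing faithfulness to Proposition \ref{ag}. Concretely, a separable fusion category $\C$ with $n$ simple objects is described, up to equivalence, by a finite amount of linear-algebraic data: the fusion coefficients $N_{ij}^l$ fix the Grothendieck ring, and the associativity constraints are a collection of invertible matrices (the $6j$-symbols / associator components) on the multiplicity spaces, subject to the pentagon axiom and a rigidity/normalization requirement. Since $\widehat{\C_1}\cong\widehat{\C_2}$ forces the two categories to have the same (based) Grothendieck ring — the Grothendieck ring is a lift-invariant discrete datum — we may fix a common fusion ring, fix identifications of the multiplicity spaces as free $W(k)$-modules, and let $\bold V$ be the affine $W(k)$-scheme parametrizing all tuples of associator matrices (with no pentagon axiom imposed), carrying the natural rational action of $\bold G=\prod_{i,j,l}GL(N_{ij}^l)$ (base change of simple objects, i.e. gauge transformations). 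The lifts $\widetilde{\C_1},\widetilde{\C_2}$ then give points $v_1,v_2\in\bold V(W(k))$ whose reductions $v_1^0,v_2^0\in\bold V(k)$ encode $\C_1,\C_2$, and the hypothesis says $v_1,v_2$ are $\bold G(\overline K)$-conjugate. The conclusion $\C_1\cong\C_2$ is exactly $\bold G(k)$-conjugacy of $v_1^0,v_2^0$.

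Next I would verify the closedness hypothesis of Proposition \ref{ag}: the $\bold G$-orbit of $v^0$ for any separable fusion category $\C$ over $k$ is closed in $\bold V_k$. This is the categorical analog of Lemma \ref{clo}, and it is proved in \cite{ENO}, Section 9: any point $u'$ in the closure $\overline{\bold G u}$ still satisfies the pentagon axiom (a closed condition) and still has nonzero global dimension (the global dimension is a $\bold G$-invariant function, constant on $\bold Gu$, hence constant on its closure, hence nonzero at $u'$), so $u'$ defines a separable fusion category $\C'$ with the same Grothendieck ring; by \cite{ENO} the group $\Aut_\otimes(\C')$ — which up to a torus coming from the normalization freedom is the stabilizer of $u'$ — is finite, so $\dim\bold Gu'=\dim\bold Gu$, forcing $u'\in\bold Gu$. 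Granting closedness of both orbits $\bold Gv_1^0$ and $\bold Gv_2^0$, disjointness is automatic since $v_1^0,v_2^0$ lie in the same fiber $\bold V$ and inequivalent categories give distinct orbits; Proposition \ref{ag} then yields that $v_1,v_2$ are not $\bold G(\overline K)$-conjugate — contradiction unless $v_1^0,v_2^0$ are already $\bold G(k)$-conjugate, i.e. $\C_1\cong\C_2$. For the braided or symmetric case one enlarges $\bold V$ to also include the braiding components $c_{X,Y}$ (another tuple of matrices on multiplicity spaces) and keeps the same $\bold G$; all arguments go through verbatim, as indicated in the excerpt.

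The main obstacle, and the step deserving the most care, is setting up $\bold V$, $\bold G$, and the dictionary between $\bold G$-orbits on $\bold V(k)$ and equivalence classes of fusion categories \emph{correctly over $W(k)$}, not just over a field: one must check that the lift $\widetilde{\C}$ of \cite{ENO}, Theorem 9.3, really does present as a $W(k)$-point of this scheme with free multiplicity modules, that two such points are $\bold G(R)$-conjugate iff the corresponding $R$-linear categories are equivalent (for $R=W(k)$ and $R=\overline K$), and that the Grothendieck ring genuinely does not change under lifting so that a common fusion ring can be fixed at the outset. This is essentially bookkeeping carried out in \cite{ENO}, Section 9, but it is the part where the ``gap'' being repaired lived, so it must be invoked precisely. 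Once the scheme-theoretic setup and Lemma \ref{clo}'s analog are in hand, the deduction from Proposition \ref{ag} is immediate and formally identical to the proof of Theorem \ref{t1}.
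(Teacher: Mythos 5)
Your proposal follows essentially the same route as the paper: encode tensor structures on a fixed skeletal semisimple category as points of an affine space $\bold V$ over $W(k)$ acted on by a split reductive gauge group, prove that the orbits of separable structures over $k$ are closed via finiteness of ${\rm Aut}(\C)$ (the paper's Lemma \ref{clo1}), and conclude by Proposition \ref{ag}. The only differences are bookkeeping that the paper makes explicit: $\bold V$ consists of triples $(\Phi,\Phi',{\rm ev})$ so that invertibility and the global dimension are polynomial in the coordinates, and the group is the quotient $\bold G=\bold T/\bold T_0$ of $\bold T=\prod_{i,j,m}GL(N_{ij}^m)$ by the central torus of trivial twists, so that stabilizers are literally finite (your ``up to a torus'') and one records at the end that $\bold T(W(k))\to\bold G(W(k))$ is surjective.
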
  

\begin{proof} We will treat the fusion case; the braided and symmetric cases are similar. 

We generalize the proof of Theorem \ref{t1}. As in \cite{ENO}, Subsection 9.3, we may assume that $\C_1$ and $\C_2$ have the same underlying 
semisimple abelian category $\overline\C$ with the tensor product functor $\otimes$, a skeletal category with Grothendieck ring ${\rm Gr}(\overline\C)$. 
So it has simple objects $X_i$, $i\in \Bbb I$, with $X_0=\bold 1$, and $X_i\otimes X_j=\oplus_m k^{N_{ij}^m}X_m$. 
We will also fix the unit morphism $\iota: \bold 1\otimes \bold 1\to \bold 1$, and the coevaluation morphisms. 
Define a pre-tensor structure on $\overline\C$ to be a triple $(\Phi,\Phi',{\rm ev})$, where $\Phi$ is an associativity morphism, 
$\Phi'$ an "inverse" associativity morphism in the opposite direction, and ${\rm ev}$ is a collection of evaluation morphisms, 
but without any axioms. Then a tensor structure on $\overline\C$ is a pre-tensor structure such that $\Phi\circ \Phi'=\Phi'\circ \Phi={\rm Id}$ 
and $(\Phi,{\rm ev})$ satisfy the axioms of a rigid tensor category (with the fixed unit and coevaluation morphisms), see \cite{EGNO}, Definitions 2.1.1 and 2.10.11. 

Let
$$
N_{ijl}^s:=[X_i\otimes X_j\otimes X_l:X_s]=\sum_m N_{ij}^mN_{ml}^s=\sum_p N_{ip}^sN_{jl}^p.
$$ 
Let $\bold V$ be the space of all pre-tensor structures on $\overline\C$, an affine space over $W(k)$ of dimension
$2\sum_{i,j,l,s} (N_{ijl}^s)^2+{\rm rank}{\rm Gr}(\overline\C)$ (where the first summand corresponds to pairs $(\Phi,\Phi')$ and the second summand to ${\rm ev}$).   
Then $\widehat{\C}_i$ give rise to vectors $v_i\in \bold V(W(k))$, and $\C_i$ correspond to their reductions $v_i^0$ modulo $I$. 

Now let us define the relevant affine group scheme $\bold G$. To this end, following \cite{ENO}, Subsection 9.3, let $\bold T=\Aut(\otimes)$ 
be the group of automorphisms of the tensor product functor on $\overline\C$. Then $\bold T$ acts naturally on $\bold V$ by twisting (where after twisting 
we renormalize the coevaluation and unit morphisms to be the fixed ones). Let $\bold T_0\subset \bold T$ be the subgroup of ``trivial twists", 
i.e. ones of the form $J_{X,Y}=z_{X\otimes Y}(z_X^{-1}\otimes z_Y^{-1})$, where $z\in \Aut(\Id_{\overline\C})$ is an invertible element of the center of $\overline\C$. 
Then $\bold T_0$ is a closed central subgroup of $\bold T$ which acts trivially on $\bold V$. Let $\bold G:=\bold T/\bold T_0$. Then $\bold G$ is an affine group scheme acting rationally on $\bold V$. Moreover, $\bold T=\prod_{i,j,m}GL(N_{ij}^m)$, and $\bold T_0$ is a central torus in $\bold T$, hence $\bold G$ is a split reductive group. 

Let $\C$ be a separable fusion category over $k$ of some global dimension $d\ne 0$, and $u$ be the corresponding vector in $\bold V(k)$. 
Let $\Aut(\C)$ denote the group of isomorphism classes of tensor autoequivalences 
of $\C$.

\begin{lemma}\label{clo1} (\cite{ENO}, Section 9) 
The $\bold G$-orbit $\bold Gu$ of $u$ is closed.
\end{lemma} 

\begin{proof} 
Let $u'\in \overline{\bold G u}$. 
Then $u'$ corresponds to a fusion category $\C'$. Moreover the global dimension of $\C'$ is $d$, 
since this is so for all points of $\bold Gu$, and the global dimension depends algebraically on $\Phi,\Phi',{\rm ev}$. 
Thus, $\C'$ is separable. But then the stabilizers of $u,u'$ in $\bold G(k)$, which are isomorphic to $\Aut(\C)$, ${\rm Aut}(\C')$ are finite by \cite{ENO}, Theorem 2.31
(which applies in characteristic $p$ for separable categories, see \cite{ENO}, Section 9).  
Hence, $\dim \bold Gu'=\dim \bold Gu=\dim \bold G$. This implies that $u'\in \bold Gu$. Hence, $\bold Gu$ is closed.   
\end{proof} 

Thus, the orbits of $v_1^0$, $v_2^0$ are closed. Hence, Theorem \ref{t4} follows from Proposition \ref{ag} and the fact 
that the natural map $\bold T(W(k))\to (\bold T/\bold T_0)(W(k))$ is surjective. 
\end{proof} 

\begin{remark} Note that Theorem \ref{t4} implies \cite{ENO}, Theorem 9.6; namely, the complete local ring $R$ 
in Theorem 9.6 without loss of generality may be replaced by $W(k)$. 
\end{remark} 

\begin{remark}\label{egcorr} 
We can now complete the proof of \cite{EG2}, Theorem 6.1, which states, essentially, that 
any semisimple cosemisimple triangular Hopf algebra over $k$ is a twist of a group algebra. 
The original proof of this theorem which appeared in \cite{EG2} is incomplete 
(namely, it is not clear at the end of this proof why $F\circ F'={\rm Id}$). 
This is really a consequence of faithfulness of the lifting. Namely, if $(A,R)$ 
is a semisimple cosemisimple triangular Hopf algebra over $k$, and $(A',R')=F\circ F'(A,R)$, 
then $(A,R)$ and $(A',R')$ have isomorphic liftings over $\overline K$, hence by Theorem \ref{t1}
they are isomorphic. 

Another proof of \cite{EG2}, Theorem 6.1 is obtained by using Theorem \ref{t4} for symmetric tensor categories. 
Namely, consider the separable symmetric fusion category $\C:=\Rep (A,R)$.
Then $\widehat \C$ is a symmetric fusion category over $\overline K$. Hence, by Deligne's theorem (\cite{D}, see also \cite{EGNO}, Corollary 9.9.25), 
$\widehat \C=\Rep_{\overline K}(G,z)$, where $G$ is a finite group of order coprime to $p$ 
and $z\in G$ is a central element of order $\le 2$ (the category of representations of $G$ on superspaces with parity defined by $z$). 
Thus, $\widehat \C\cong \widehat \D$ as symmetric tensor categories, where $\D=\Rep_k(G,z)$. 
Hence, by Theorem \ref{t4}, $\C\cong \D$, i.e., $(A,R)$ is obtained by twisting of $(k[G],R_z)$, 
where $R_z=1\otimes 1$ if $z=1$ and $R_z=\frac{1}{2}(1\otimes 1+1\otimes z+z\otimes 1-z\otimes z)$ 
if $z\ne 1$ (note that if $z\ne 1$ then $|G|$ is necessarily even, so $p>2$ and $1/2\in k$). 
\end{remark} 

\begin{remark}\label{nik} Let $G$ be a finite group. Recall (\cite{EGNO}) 
that categorifications of the group ring $\Bbb ZG$ over a field $F$ correspond to elements of $H^3(G,F^\times)$. 
Hence, the lifting map for pointed fusion categories which categorify $\Bbb ZG$ is the natural map $\alpha: H^3(G,k^\times)\to H^3(G,\overline K^\times)$ arising from the isomorphisms $H^i(G,\overline{K}^\times)\cong H^i(G,\overline{K}^\times_f)$ and $H^i(G,k^\times)\cong H^i(G,k^\times_f)$, where $\overline{K}^\times_f$ is the group of elements of finite order (i.e., roots of unity) in $\overline{K}^\times$, and $k^\times_f$ is the group of roots of unity in $k^\times$. 
The map $\alpha$ is then induced by the Brauer lift map $k^\times_f\to \overline{K}^\times_f$, and is clearly injective since the Brauer lift identifies $k^\times_f$ with a direct summand of $\overline{K}^\times_f$. Thus, for pointed categories faithfulness of the lifting is elementary. 

Similarly, braided categorifications $\Bbb ZG$ over $F$ (for abelian $G$) correspond to quadratic forms $G\to F^\times$ (see \cite{EGNO}), and lifting for such categorifications is defined by the Brauer lift for quadratic forms, hence is clearly faithful.  
\end{remark} 

\subsection{Integrality of the stabilizer} 

\begin{theorem}\label{t5} Let $\C_1,\C_2$ be (braided, symmetric) separable fusion categories over $k$, and 
$g: \widehat \C_1\to \widehat \C_2$ be an equivalence. Then $g$ defines an equivalence $\widetilde \C_1\to \widetilde \C_2$, i.e., it is isomorphic to the lift of an equivalence $g_0: \C_1\to \C_2$ over $W(k)$. 
\end{theorem}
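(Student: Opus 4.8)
\textbf{Proof proposal for Theorem \ref{t5}.}

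The plan is to mimic the proof of Theorem \ref{t2}, transporting the Hopf-algebra argument verbatim to the fusion-category setting, with the group scheme $\bold G = \bold T/\bold T_0$ and the affine space $\bold V$ of pre-tensor structures on $\overline{\C}$ introduced in the proof of Theorem \ref{t4}. First, by Theorem \ref{t4} we may assume $\C_1 = \C_2 = \C$, so that $g$ is an autoequivalence of $\widehat{\C}$. Let $v\in\bold V(W(k))$ be the vector corresponding to $\widetilde\C$ and $v^0\in\bold V(k)$ its reduction, corresponding to $\C$. Let $S = \Aut(\widehat\C)\subset\bold G(\overline K)$ be the stabilizer of $v$, let $S_0 = \Aut(\C)\subset\bold G(k)$ be the stabilizer of $v^0$, and let $\bold S_0$ be the scheme-theoretic stabilizer of $v^0$. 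One then checks the three hypotheses of Proposition \ref{ag2}. Hypothesis (i), closedness of the $\bold G$-orbit of $v^0$, is exactly Lemma \ref{clo1}. Hypothesis (ii), that $\bold S_0$ is finite and reduced: finiteness is \cite{ENO}, Theorem 2.31, applied in characteristic $p$ to the separable category $\C$ (as cited in the proof of Lemma \ref{clo1}); reducedness must be extracted from the fact that, since the global dimension $d$ of $\C$ is invertible, the relevant deformation/obstruction groups (the Davydov--Yetter cohomology controlling infinitesimal automorphisms and deformations of the tensor structure) vanish, exactly as in \cite{ENO}, Section 9 — this is the categorical analogue of \cite{EG}, Corollary 1.3. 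Hypothesis (iii), the existence of a lifting map $i: S_0\to S$ with $i(S_0)\subset\bold G(W(k))$ reducing to the identity mod $p$, is the content of \cite{ENO}, Theorem 9.3 / Corollary 9.4, which produces not just the lift $\widetilde\C$ but a functorial lift of each tensor autoequivalence of $\C$.

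Once these three hypotheses are verified, Proposition \ref{ag2} gives that $i$ is an isomorphism $S_0\xrightarrow{\sim} S$. In particular, the given equivalence $g\in S$ equals $i(g_0)$ for a unique $g_0\in\Aut(\C) = S_0$, and by construction $i(g_0)$ lies in $\bold G(W(k))$ and reduces mod $p$ to $g_0$. Translating back out of the coordinates: $g\in\bold G(W(k))$ means the tensor autoequivalence $g$ of $\widehat\C$ is defined over $W(k)$, i.e.\ it restricts to an equivalence $\widetilde\C\to\widetilde\C$, and its reduction mod $p$ is the equivalence $g_0$ of $\C$. This is precisely the assertion that $g$ is isomorphic to the lift of $g_0$. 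One small bookkeeping point, as in Theorem \ref{t4}: the group acting is $\bold G = \bold T/\bold T_0$, so to lift an element of $\bold G(W(k))$ to an honest natural transformation (an element of $\bold T(W(k))$) one uses that $\bold T(W(k))\to(\bold T/\bold T_0)(W(k))$ is surjective, which holds because $\bold T_0$ is a split central torus and $W(k)$ is local.

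For the braided and symmetric cases one repeats the argument with $\bold V$ enlarged to record the braiding (respectively, constrained so the braiding is symmetric) and $\bold G$ unchanged; Lemma \ref{clo1} and the finiteness/reducedness of $\bold S_0$ go through identically since the braiding adds only finitely many algebraic equations and the relevant cohomological vanishing is unaffected, and the lifting map $i$ is again supplied by \cite{ENO}, Theorem 9.3.

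I expect the main obstacle to be the verification of hypothesis (ii), specifically the \emph{reducedness} of $\bold S_0$: one must be sure that the scheme-theoretic automorphism group of a separable fusion category in characteristic $p$ has no infinitesimal part. This is where separability (invertibility of the global dimension) is genuinely used, via vanishing of the tangent space to $\bold S_0$, and it is the categorical counterpart of the Hopf-algebra input \cite{EG}, Theorem 1.2 and Corollary 1.3; everything else in the proof is a faithful transcription of the proof of Theorem \ref{t2}.
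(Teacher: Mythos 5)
Your proposal matches the paper's proof essentially step for step: reduce to $\C_1=\C_2=\C$ via Theorem \ref{t4}, verify the three hypotheses of Proposition \ref{ag2} using Lemma \ref{clo1}, the finiteness/reducedness of $\bold S_0$ from \cite{ENO} Theorems 2.27 and 2.31 (valid in characteristic $p$ for separable categories), and the lifting map from \cite{ENO} Theorems 9.3, 9.4, then conclude using the surjectivity of $\bold T(W(k))\to(\bold T/\bold T_0)(W(k))$. The proposal is correct and takes the same approach as the paper.
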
 

\begin{proof} As before, we treat only the fusion case; the braided and symmetric cases are similar. 
By Theorem \ref{t4}, we may assume that $\C_1=\C_2=\C$. 
Let $v\in \bold V(W(k))$ be the vector corresponding to $\widetilde \C$, and $v^0\in \bold V(k)$ be its
reduction mod $I$, corresponding to $\C$. Let $S:={\rm Aut}(\widehat{\C})$ be the stabilizer of $v$, 
$S_0:={\rm Aut}(\C)$ be the stabilizer of $v^0$, and $\bold S_0$ the scheme-theoretic stabilizer of $v^0$. 

By Lemma \ref{clo1}, the $\bold G$-orbit of $v^0$ is closed. 
By Theorem 2.27 and Theorem 2.31 of \cite{ENO} (both valid in characteristic $p$ 
for separable fusion categories, see \cite{ENO}, Section 9), 
$\bold S_0$ is finite and reduced. Finally, by \cite{ENO}, Theorems 9.3, 9.4, 
we have a lifting map $i: S_0\hookrightarrow S$, such that for all $g_0\in \bold G(k)$, $i(g_0)$ is integral and 
the reduction of $i(g_0)$ modulo $p$ equals $g_0$. 
Thus, Proposition \ref{ag2}  applies, and the result follows
(again using that the natural map $\bold T(W(k))\to (\bold T/\bold T_0)(W(k))$ is surjective). 
\end{proof} 

\begin{remark} 1. Recall that a multifusion category is called separable if all of its component fusion categories are separable, see \cite{ENO}, Section 9 and \cite{DSS}. Theorems \ref{t7} and \ref{t8} extend to separable multifusion categories with similar proofs.  

2. Theorem \ref{t5} is not stated explicitly in \cite{ENO}, but is claimed implicitly in the (incomplete) proof of \cite{ENO}, Theorem 9.6.  
\end{remark} 

\subsection{Finiteness of the orbit map} 

Let us now prove Lemma 9.7 of \cite{ENO} (which completes the proofs in \cite{ENO}, Subsection 9.3). 

\begin{theorem}\label{t6} Let $\C$ be a (braided,symmetric) separable fusion category over $k$, and $v\in \bold V(W(k))$ 
be the vector corresponding to $\widetilde{\C}$. Then the morphism $\phi: \bold G\to \bold V$ defined by 
$\phi(g)=gv$ is finite. 
\end{theorem}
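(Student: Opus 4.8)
The plan is to follow the proof of Theorem \ref{t3} verbatim, replacing each Hopf-algebraic input by its categorical counterpart. First I would apply Theorem \ref{t5} together with Lemma \ref{finetale}: the former gives that the lifting map $i:\Aut(\C)\to\Aut(\widehat\C)$ is an isomorphism and hence verifies hypothesis (iii) of the setting of Proposition \ref{ag2}, so Lemma \ref{finetale} applies and yields that the scheme-theoretic stabilizer $\bold S$ of $v$ is the flat lift of $\bold S_0=\Aut(\C)$ over $W(k)$ and that the tautological morphism $\pi:\bold G\to\bold G/\bold S$ is finite \'etale. Consequently $\phi=\nu\circ\pi$ for the natural induced morphism $\nu:\bold G/\bold S\to\bold V$, and since every scheme-theoretic fibre of $\nu$ is a point or empty (the fibres of $\phi$ over a point of $\bold V$ being cosets of $\bold S$), $\nu$ is a monomorphism; so it suffices to prove $\nu$ is finite. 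Throughout one uses, as in Theorem \ref{t4}, that $\bold T(W(k))\to(\bold T/\bold T_0)(W(k))=\bold G(W(k))$ is surjective, so that orbits and stabilizers computed with $\bold T$ and with $\bold G$ agree.

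Next I would cut down the target of $\nu$. Let $\bold Y\subset\bold V$ be the subscheme of pre-tensor structures $(\Phi,\Phi',\mathrm{ev})$ on $\overline\C$ which are genuine rigid tensor structures (i.e. satisfy $\Phi\circ\Phi'=\Phi'\circ\Phi=\Id$ together with the pentagon and rigidity axioms of \cite{EGNO}, Definitions 2.1.1 and 2.10.11) and for which the global dimension equals $d$. All of these are polynomial conditions in $(\Phi,\Phi',\mathrm{ev})$ --- the global dimension depending algebraically on them, as noted in the proof of Lemma \ref{clo1} --- so $\bold Y$ is a closed subscheme of $\bold V$, and twisting by $\bold G$ preserves both the tensor axioms and the global dimension, so $\bold Y$ is $\bold G$-invariant; moreover $v\in\bold Y(W(k))$. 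Hence $\nu$ factors through a morphism $\mu:\bold G/\bold S\to\bold Y$, and it is enough to show $\mu$ is a closed embedding. This is exactly the situation of Proposition \ref{ag3}, so I would check its two hypotheses. Hypothesis (i), that $\mu$ induces an isomorphism on formal neighborhoods at each point over $k$ and over $\overline K$, is the deformation-theoretic statement of \cite{ENO}, Theorems 9.3 and 9.4 (vanishing of the relevant deformation and obstruction groups, valid in characteristic $p$ precisely because $\C$ is separable). Hypothesis (ii), that $\bold Y$ consists of finitely many closed $\bold G$-orbits both over $k$ and over $\overline K$: since $d\ne0$ in $k$ by separability, every point of $\bold Y$ over $k$ or $\overline K$ is a separable fusion category with Grothendieck ring $\Gr(\overline\C)$, its orbit is closed by Lemma \ref{clo1}, and there are only finitely many such orbits by Ocneanu rigidity and the finiteness of the set of categorifications of a fixed fusion ring (\cite{ENO}, Section 9, which applies in characteristic $p$ for separable categories). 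Then Proposition \ref{ag3} gives that $\mu$ is a closed embedding, hence finite, and therefore $\nu$ and $\phi=\nu\circ\pi$ are finite.

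I expect the main obstacle to be hypothesis (ii): the categorical analog of Stefan's theorem used in Theorem \ref{t3}, namely that there are only finitely many fusion categories (up to equivalence) with a given Grothendieck ring, and that this remains true for separable categories in characteristic $p$ as well as over $\overline K$ in characteristic zero. This is where one must invoke Ocneanu rigidity from \cite{ENO}; one should verify carefully that the cited results apply in both characteristics and that closedness of orbits (Lemma \ref{clo1}) together with finiteness of type of $\bold Y$ indeed forces finitely many orbits. A secondary point requiring care is the formal-neighborhood isomorphism in hypothesis (i): it is genuinely the place where the separability of $\C$ enters, via vanishing of the obstruction groups, and the reduction to the $\bold T$-picture (rather than the $\bold G$-picture) must be handled as in the proofs of Theorems \ref{t4} and \ref{t5}.
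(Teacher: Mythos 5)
Your proposal is correct and follows essentially the same route as the paper: factor $\phi=\nu\circ\pi$ with $\pi$ finite \'etale via Theorem \ref{t5} and Lemma \ref{finetale}, let $\bold Y$ be the closed $\bold G$-invariant subscheme of tensor structures of global dimension $\widetilde d=\dim(\widetilde\C)$, verify hypothesis (i) of Proposition \ref{ag3} by the deformation-theoretic results of \cite{ENO}, Theorem 9.3 and Corollary 9.4, and hypothesis (ii) by Ocneanu rigidity (\cite{ENO}, Theorem 2.27) together with Lemma \ref{clo1}. The points you flag as requiring care (finiteness of categorifications in both characteristics, and the passage between $\bold T$ and $\bold G$) are exactly the inputs the paper invokes.
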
  

\begin{proof} We treat the case of fusion categories; the braided and symmetric cases are similar. 
The proof is parallel to the proof of Theorem \ref{t3}. Namely, let $\bold S$ be the scheme-theoretic stabilizer of $v$.
Then $\phi=\nu\circ \pi$, where $\pi: \bold G\to \bold G/\bold S$ is finite \'etale, and $\nu: \bold G/\bold S\to \bold V$. 
Let $\bold Y\subset \bold V$ denote the closed subscheme of vectors corresponding 
to fusion categories of global dimension $\widetilde d:=\dim(\widetilde\C)$. Then $\nu$ factors through 
$\mu: \bold G/\bold S\to \bold Y$. By \cite{ENO}, Theorem 2.27, $\bold Y$ consists of finitely many 
$\bold G$-orbits both over $k$ and over $\overline K$. Also, these orbits are closed by Lemma \ref{clo1}. 
Finally, by \cite{ENO}, Theorem 9.3, Corollary 9.4, $\mu$ induces an isomorphism on formal neighborhoods. 
Thus, Proposition \ref{ag3} applies, and the statement follows.  
\end{proof} 

\subsection{Faithfulness of the lifting and integrality of the stabilizer for tensor functors} 

Let us now prove similar results for tensor functors, i.e., Theorem 9.8  and Corollary 9.9(ii) of \cite{ENO}. 

\begin{theorem}\label{t7}
Let $\C,\D$ be two (braided, symmetric) separable fusion categories over $k$, and 
$F_1,F_2:\C\to \D$ two (braided) tensor functors. 
Let $g: \widehat F_1\to \widehat F_2$ be an isomorphism of lifts of $F_1,F_2$ over $\overline{K}$. Then $g$ 
is a lift of an isomorphism $g_0: F_1\to F_2$. In particular, if $\widehat F_1$, $\widehat F_2$ 
are isomorphic then so are $F_1,F_2$.  
\end{theorem}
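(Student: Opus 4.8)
The plan is to mirror the proof of Theorem \ref{t5}, applying Propositions \ref{ag} and \ref{ag2} to a space of ``pre-tensor-functor structures''. I treat only the tensor case, the braided one being parallel.

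First I would set up the scheme and the group. Since $\widehat F_1\cong\widehat F_2$ as tensor functors, their underlying functors are isomorphic, hence induce the same ring homomorphism ${\rm Gr}(\widehat\C)={\rm Gr}(\C)\to{\rm Gr}(\widehat\D)={\rm Gr}(\D)$, which is also the one induced by $F_1$ and by $F_2$; so, choosing $W(k)$-bases of the relevant multiplicity spaces, I may assume $F_1,F_2$ and their lifts $\widetilde F_1,\widetilde F_2$ over $W(k)$ share one underlying functor $\overline F$ between skeletal models $\overline\C,\overline\D$ of $\widehat\C,\widehat\D$. Let $\bold V$ be the affine space over $W(k)$ of triples $(J,J',J_0)$ — a candidate tensor structure $J_{X,Y}\colon\overline F(X)\otimes\overline F(Y)\to\overline F(X\otimes Y)$, an ``inverse'' $J'$ in the opposite direction, and a unit morphism — with no axioms imposed; the genuine tensor structures on $\overline F$ form a closed $\bold G$-invariant subscheme $\bold W\subset\bold V$ cut out by the polynomial equations $JJ'=J'J=\Id$ together with the hexagon, unit (and, in the braided case, braiding) compatibility axioms relating $J$ to the fixed constraints of $\C$ and $\D$. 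Here $\bold T:=\Aut(\overline F)=\prod_i\Aut_{\overline\D}(\overline F(X_i))$ is a product of general linear groups over $W(k)$, acting on $\bold V$ by $\eta\cdot J=\eta_{X\otimes Y}\circ J_{X,Y}\circ(\eta_X^{-1}\otimes\eta_Y^{-1})$ (with $J_0$ renormalized), $\bold T_0\subset\bold T$ is the central torus of twists acting trivially on $\bold V$, and $\bold G:=\bold T/\bold T_0$ is split reductive. Then $F_1,F_2$ (via $\widetilde F_1,\widetilde F_2$) determine points $v_1,v_2\in\bold W(W(k))\subset\bold V(W(k))$ with reductions $v_1^0,v_2^0$, and the stabilizer in $\bold G(k)$ of the point attached to a tensor functor $F'$ is its group $\Aut(F')$ of tensor automorphisms.

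Next I would prove the geometric lemma that, for any tensor functor $F'\colon\C\to\D$ with attached point $u$, the orbit $\bold G u$ is closed in $\bold V_k$ — the analogue of Lemma \ref{clo1}. Indeed, since $\bold W$ is closed, any $u'\in\overline{\bold G u}$ lies in $\bold W_k$, hence corresponds to a genuine tensor functor $F''\colon\C\to\D$ (invertibility of its tensor structure being forced by $JJ'=J'J=\Id$); as $\C$ and $\D$ are separable, $\Aut(F'')$ is finite by \cite{ENO}, Theorem 2.31 (valid in characteristic $p$ for separable categories, see \cite{ENO}, Section 9), so $\dim\bold G u'=\dim\bold G=\dim\bold G u$, forcing $u'\in\bold G u$. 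In particular the orbits of $v_1^0,v_2^0$ are closed, so the ``in particular'' of the theorem follows at once: the isomorphism $g$ shows $v_1,v_2$ are $\bold G(\overline K)$-conjugate, whence by Proposition \ref{ag} the closed orbits of $v_1^0,v_2^0$ cannot be disjoint, hence coincide, i.e. $F_1\cong F_2$ (lifting a conjugator from $\bold G(k)$ to $\bold T(k)$). For the main assertion, having shown $F_1\cong F_2$ I set $F_1=F_2=F$; by uniqueness of the lift (\cite{ENO}, Section 9) then $\widetilde F_1=\widetilde F_2=:\widetilde F$, so $v_1=v_2=:v$ and $g\in S:=\Aut(\widehat F)$, the stabilizer of $v$ in $\bold G(\overline K)$. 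Writing $v^0$ for the reduction of $v$, $S_0:=\Aut(F)$ for the stabilizer of $v^0$ in $\bold G(k)$, and $\bold S_0$ for the scheme-theoretic stabilizer: (i) $\bold G v^0$ is closed by the lemma; (ii) $\bold S_0$ is finite and reduced by \cite{ENO}, Theorems 2.27 and 2.31; (iii) the construction of $\widetilde F$ in \cite{ENO}, Section 9, supplies a lifting map $i\colon S_0\hookrightarrow S$ with $i(g_0)\in\bold G(W(k))$ integral and reducing to $g_0$ mod $p$. Proposition \ref{ag2} then shows $i$ is an isomorphism, so $g=i(g_0)$ for a unique $g_0\in S_0=\Aut(F)$, i.e. $g$ is the lift of the isomorphism $g_0\colon F_1\to F_2$ (using, if $\bold T_0\ne1$, surjectivity of $\bold T(W(k))\to\bold G(W(k))$ to view $i(g_0)$ as an honest functor automorphism over $W(k)$).

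The main obstacle is the closedness lemma, and within it the two points: that the closure of the orbit of a genuine tensor functor still consists of genuine tensor functors — which is why $\bold V$ must carry the auxiliary datum $J'$, turning invertibility of $J$ into a closed condition — and that automorphism group schemes of tensor functors between separable fusion categories are finite and reduced, which I would import from \cite{ENO}, Section 9. The remaining work (split reductivity of $\bold G$, the reduction to a common underlying functor, and verifying the hypotheses of Proposition \ref{ag2}) is routine once those categorical inputs are in place.
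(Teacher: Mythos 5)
Your proposal is correct and follows essentially the same route as the paper: pre-tensor structures $(J,J')$ on a common underlying functor $\overline F$, closedness of orbits via finiteness of automorphism groups of tensor functors between separable categories, then Proposition \ref{ag} for faithfulness and Proposition \ref{ag2} for integrality of $g$. The only (harmless) deviation is that you pass to a quotient $\bold T/\bold T_0$; the paper works directly with the full product of general linear groups $\Aut(\overline F)$, since for tensor functors the stabilizers are already finite without quotienting out a central torus.
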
 

\begin{proof} The proof is similar to the proofs of Theorems \ref{t1}, \ref{t2} and Theorems \ref{t4}, \ref{t5}. 
We treat the case of tensor functors between fusion categories; the cases of braided and symmetric categories 
are similar. 

We may assume that $F_1,F_2$ coincide with a given functor $\overline F$ as additive functors, and differ only by the tensor structures.
Let $\bold V$ be the space of pre-tensor structures on $\overline F$, i.e., pairs $(J,J')$ of endomorphisms of the functor $\overline F(\cdot\otimes \cdot)$, without any axioms.
Then a tensor structure on $\overline F$ is such a pair $(J,J')$ for which $J\circ J'=J'\circ J={\rm id}$, and $J$ satisfies the tensor structure axiom, \cite{EGNO}, Definition 2.4.1. 
Let $\bold G$ be the group scheme of all automorphisms of the functor $\overline F$. Then $\bold G$ is a split reductive group (a product of general linear groups)
which acts on $\bold V$ by ``gauge transformations".  Moreover, the functors $\widetilde F_j$, $j=1,2$, correspond to vectors $v_j\in \bold V(W(k))$, and $F_j$ correspond to their reductions $v_j^0$ modulo $p$. 

\begin{lemma}\label{clo2} Let $u\in \bold V(k)$ be a vector representing a tensor functor $F$.
Then the orbit $\bold Gu$ is closed.
\end{lemma} 

\begin{proof} 
Let $u'\in \overline{\bold G u}$. 
Then $u'$ corresponds to a tensor functor $F'$. 
But the group of automorphisms of a tensor fuctor between separable fusion categories is finite by \cite{ENO}, Theorem 2.27. 
Hence the stabilizers of $u,u'$ in $\bold G(k)$, which are isomorphic to ${\rm Aut}(F)$, ${\rm Aut}(F')$, are finite.  
Hence, $\dim \bold Gu'=\dim \bold Gu=\dim \bold G$. This implies that $u'\in \bold Gu$. Hence, $\bold Gu$ is closed.   
\end{proof} 

By Lemma \ref{clo2}, Proposition \ref{ag} applies. Thus, $F_1\cong F_2$ as tensor functors. So we may assume without loss of generality 
that $F_1=F_2=F$ for some tensor functor $F$. 

Let $v\in \bold V(W(k))$ and $v^0\in \bold V(k)$ be its reduction modulo $p$. Let $S_0=\Aut(F)\subset \bold G(k)$, $\bold S_0$ be the scheme-theoretic stabilizer of $v^0$, 
and $S={\rm Aut}(\widehat F)\subset \bold G(\overline K)$. 
Then $\bold S_0$ is finite and reduced by \cite{ENO}, Theorem 2.27.
Also, by \cite{ENO}, Theorem 9.3 and Corollary 9.4, we have a lifting map $i: S_0\hookrightarrow S$
such that  for any $g_0\in S_0$, $i(g_0)$ is integral and the reduction of $i(g_0)$ modulo $p$ is $g_0$. 
Hence, Proposition \ref{ag2}  applies, and the result follows. 
\end{proof}

\begin{corollary}\label{t8}
For a (braided, symmetric) separable fusion category $\C$ over $k$, 
the lifting map $i: {\rm Aut}(\C)\to {\rm Aut}(\widehat{\C})$ defined in \cite{ENO}, Theorem 9.3, 
is an isomorphism. 
\end{corollary}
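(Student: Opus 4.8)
The plan is to deduce Corollary \ref{t8} directly from Theorem \ref{t5}, in the same way that the final sentence of Theorem \ref{t2} was deduced from the body of that theorem. Fix a separable fusion category $\C$ over $k$ and set $\C_1=\C_2=\C$. An element of $\Aut(\C)$ is (the isomorphism class of) a tensor autoequivalence of $\C$; applying the lifting map $i$ of \cite{ENO}, Theorem 9.3, produces a tensor autoequivalence of $\widehat\C$, and Theorem 9.3 together with Corollary 9.4 of \cite{ENO} tells us that $i$ is a well-defined group homomorphism, that $i(g_0)$ is integral, and that the reduction of $i(g_0)$ modulo $p$ recovers $g_0$. So the content of the corollary is exactly that $i$ is bijective.

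First I would check injectivity. If $g_0\in\Aut(\C)$ and $i(g_0)$ is isomorphic to the identity autoequivalence of $\widehat\C$, then reducing modulo $p$ and using that reduction of $i(g_0)$ equals $g_0$ (compatibly with isomorphisms, since the reduction functor is well-defined on isomorphism classes — this is part of \cite{ENO}, Theorem 9.3), we get that $g_0$ is isomorphic to $\id_\C$, i.e. $g_0$ is trivial in $\Aut(\C)$. For surjectivity, take any $g\in\Aut(\widehat\C)$, i.e. a tensor autoequivalence of $\widehat\C$. Viewing $\widehat\C$ as $\widehat\C_1=\widehat\C_2$ with $\C_1=\C_2=\C$, the equivalence $g\colon\widehat\C_1\to\widehat\C_2$ is precisely the kind of map to which Theorem \ref{t5} applies, so $g$ is isomorphic to the lift of an equivalence $g_0\colon\C_1\to\C_2$ over $W(k)$; that is, $g\cong i(g_0)$ in $\Aut(\widehat\C)$ with $g_0\in\Aut(\C)$. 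Hence $i$ is surjective. Combined with injectivity, $i$ is an isomorphism of groups.

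In effect there is almost nothing to prove beyond invoking Theorem \ref{t5}: surjectivity of $i$ is the ``moreover'' direction of that theorem specialized to $\C_1=\C_2$, and injectivity is immediate from the reduction-mod-$p$ compatibility of the lifting functor. The only mild point to be careful about — and the one I would regard as the main (very minor) obstacle — is bookkeeping about isomorphism classes: $\Aut(\C)$ and $\Aut(\widehat\C)$ are groups of \emph{isomorphism classes} of tensor autoequivalences, so one must make sure that the lifting map $i$ descends to isomorphism classes and that the reduction-mod-$p$ operation used in Theorem \ref{t5} does too, so that the identity ``reduction of $i(g_0)\cong g_0$'' is an identity of classes, not merely of functors. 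Both facts are provided by \cite{ENO}, Theorem 9.3 and Corollary 9.4, so the corollary follows. The braided and symmetric cases are identical, using the braided/symmetric versions of Theorem \ref{t5}.
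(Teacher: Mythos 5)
Your surjectivity argument is exactly the paper's: apply Theorem \ref{t5} with $\C_1=\C_2=\C$ to see that every tensor autoequivalence of $\widehat\C$ is isomorphic to the lift of one of $\C$. That half is fine.

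The injectivity half has a genuine gap. You claim that if $i(g_0)\cong\mathrm{id}_{\widehat\C}$ then one can ``reduce modulo $p$'' to conclude $g_0\cong\mathrm{id}_\C$, and you describe this as immediate bookkeeping. But the isomorphism $i(g_0)\cong\mathrm{id}_{\widehat\C}$ is only given over $\overline K$: it is a natural isomorphism of tensor functors with coefficients in $\overline K$, and there is no a priori reason it preserves the $W(k)$-forms, so it cannot simply be reduced modulo $p$. Knowing that two functors defined over $W(k)$ become isomorphic after base change to $\overline K$ does not by itself imply their reductions over $k$ are isomorphic --- establishing exactly that implication is the content of Theorem \ref{t7} (faithfulness of the lifting for tensor functors), whose proof requires the closed-orbit and power-reductivity machinery of Proposition \ref{ag}. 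Indeed, this is precisely the kind of ``reduce mod $p$'' shortcut whose failure (for non-reductive groups or non-closed orbits) motivated the whole paper; see the counterexamples after Proposition \ref{ag}. The paper's proof of the corollary is simply: Theorem \ref{t7} gives injectivity, Theorem \ref{t5} gives surjectivity. Your proof becomes correct if you replace your injectivity paragraph by an appeal to Theorem \ref{t7} applied to $F_1=g_0$ and $F_2=\mathrm{id}_\C$.
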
 

\begin{proof} Theorem \ref{t7} implies that $i$ is injective, and Theorem \ref{t5} implies that $i$ is surjective. 
\end{proof} 

\subsection{Application to Brauer-Picard and Picard groups of tensor categories} 

Recall from \cite{ENO2} that to any fusion category $\C$ one can attach its Brauer-Picard groupoid
$\underline{\underline{\rm BrPic}}(\C)$. This is a 3-group, whose 1-morphisms are 
equivalence classes of invertible $\C$-bimodule categories, 2-morphisms are 
bimodule equivalences of such bimodule categories, and 3-morphisms are 
isomorphisms of such equivalences. Similarly, if $\C$ is braided then one can define its Picard groupoid 
$\underline{\underline{\rm Pic}}(\C)$, a 3-group whose 1-morphisms are equivalence classes of invertible $\C$-module categories,
2-morphisms are module equivalences of such module categories, and 3-morphisms are 
isomorphisms of such equivalences. 

Theorems 9.3, 9.4 of \cite{ENO} then imply that if $\C$ is separable then we have 
the lifting morphism  $i: \underline{\rm BrPic}(\C)\to \underline{\rm BrPic}(\widehat \C)$
and (in the braided case) $i: \underline{\rm Pic}(\C)\to \underline{\rm Pic}(\widehat \C)$
of the underlying 2-groups. 

\begin{corollary}\label{t9} Let $\C$ be a separable fusion category over $k$. 

(i) The lifting morphism $i: \underline{\rm BrPic}(\C)\to \underline{\rm BrPic}(\widehat \C)$ is an isomorphism;

(ii) If $\C$ is braided then the lifting morphism $i: \underline{\rm Pic}(\C)\to \underline{\rm Pic}(\widehat \C)$ is an isomorphism.
\end{corollary}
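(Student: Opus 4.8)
The plan is to reduce the statement about the Brauer--Picard groupoid to the already-established results on equivalences and tensor functors between separable fusion categories and their lifts. The key structural fact is that $\underline{\underline{\rm BrPic}}(\C)$ can be reconstructed categorically: by \cite{ENO2}, invertible $\C$-bimodule categories are classified by braided autoequivalences of the Drinfeld center $Z(\C)$ that act trivially on the M\"uger center (equivalently, by the group of braided autoequivalences of $Z(\C)$ preserving the canonical Lagrangian algebra), and bimodule equivalences correspond to isomorphisms of the associated braided functors. So I would first observe that the center construction commutes with lifting: since $\widetilde\C$ is the lift of $\C$ and $Z(\cdot)$ is defined by finite (co)limits and the braiding is algebraic in the associativity data, one has $Z(\widehat\C)\cong \widehat{Z(\C)}$ as braided fusion categories, and $Z(\C)$ is again separable (its global dimension is $(\dim\C)^2\neq 0$).

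The second step is to apply the previously proven results at the level of the center. Corollary \ref{t8} for the braided category $Z(\C)$ says that the lifting map ${\rm Aut}^{\rm br}(Z(\C))\to {\rm Aut}^{\rm br}(Z(\widehat\C))$ is an isomorphism of groups; Theorem \ref{t7} in the braided case gives that isomorphisms of lifts of braided tensor functors descend, so the lifting map is an isomorphism on the level of 2-morphisms as well; and the 3-morphisms (isomorphisms of equivalences) are automatically matched since they are determined by finitely much linear-algebraic data over $\overline K$ that is defined over $W(k)$. Assembling these, I would conclude that $i$ is an equivalence of the underlying 2-groups of $\underline{\underline{\rm BrPic}}(\C)$ and $\underline{\underline{\rm BrPic}}(\widehat\C)$, which is precisely (i). For part (ii), when $\C$ is braided, $\underline{\underline{\rm Pic}}(\C)$ is the subgroupoid of $\underline{\underline{\rm BrPic}}(\C)$ consisting of one-sided (say, from the braiding) bimodule structures; alternatively it is described via braided autoequivalences of $Z(\C)$ that restrict to the identity on the image of $\C$. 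Either description is compatible with lifting by the same argument, so $i$ is an isomorphism on Picard groupoids as well.

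The main obstacle I anticipate is making precise and rigorous the claim that the center and the relevant invariant-Lagrangian-algebra data are ``defined over $W(k)$'' and compatible with reduction mod $p$: one needs that the lifting $\widetilde{Z(\C)}$ of the center agrees with the center $Z(\widetilde\C)$ of the lifting as $W(k)$-linear braided categories (not merely after inverting $p$), and that the canonical Lagrangian algebra in $Z(\C)$ lifts to the corresponding one in $Z(\widetilde\C)$. This is where one must invoke the uniqueness of lifting from \cite{ENO}, Theorem 9.3, Corollary 9.4 together with separability of $Z(\C)$, so that both $\widetilde{Z(\C)}$ and $Z(\widetilde\C)$ are the unique lift of $Z(\C)$ and hence equivalent over $W(k)$. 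Once this compatibility is in place, the rest is a formal diagram chase: the bijections on $1$-, $2$-, and $3$-morphisms follow from Corollary \ref{t8} and Theorem \ref{t7} applied to $Z(\C)$, and functoriality of all the constructions guarantees that these bijections assemble into an isomorphism of 3-groups.
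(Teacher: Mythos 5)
Your proposal follows essentially the same route as the paper: the identification $\xi:{\rm BrPic}(\D)\cong \Aut^{\rm br}(\mathcal Z(\D))$ from \cite{ENO2}, its compatibility with lifting, Corollary \ref{t8} applied to the (braided, separable) center for the 1-morphisms, and the \cite{DN} description of ${\rm Pic}(\D)$ inside $\Aut^{\rm br}(\mathcal Z(\D))$ for part (ii). The only cosmetic difference is at the level of 2-morphisms, where the paper simply notes $\pi_2(\underline{\rm BrPic}(\D))={\rm Inv}(\mathcal Z(\D))$ and uses the obvious bijection on invertible objects, while you invoke Theorem \ref{t7}; both work.

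One genuine error, however: your claims that the 3-morphisms are ``automatically matched'' and that the bijections ``assemble into an isomorphism of 3-groups'' are false. The corollary as stated concerns only the underlying 2-groups $\underline{\rm BrPic}$ and $\underline{\rm Pic}$ (single underline), precisely because $\pi_3$ of the full 3-groups is the multiplicative group of the ground field, and $k^\times\ncong \overline K^\times$; the paper makes this point explicitly right after the corollary, and only recovers a morphism of 3-groups after choosing a Brauer-lift splitting $k^\times\to W(k)^\times$. Since the statement you were asked to prove does not involve the 3-morphisms, this overreach does not invalidate your proof of (i) and (ii), but the assertion itself should be removed.
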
 

\begin{proof} (i) By \cite{ENO2}, Theorem 1.1, for any separable fusion category $\D$ one has an isomorphism 
$\xi: {\rm BrPic}(\D)\cong \Aut^{\rm br}(\mathcal{Z}(\D))$ of the Brauer-Picard group ${\rm BrPic}(\D)$ with the group of isomorphism classes of braided autoequivalences of the Drinfeld center of $\D$. It is clear that this isomorphism is compatible with lifting. Therefore, the statement at the level of 1-morphisms follows from Corollary \ref{t8}. 
Also, recall that $\pi_2(\underline{\rm BrPic}(\D))={\rm Inv}(\mathcal Z(\D))$, the group of isomorphism 
classes of invertible objects of $\mathcal{Z}(\D)$. Thus, at the level of 2-morphisms $i$ comes from the obvious isomorphism 
${\rm Inv}(\mathcal{Z}(\C))\cong {\rm Inv}(\mathcal{Z}(\widehat \C))$, which gives (i).

(ii)  By \cite{DN}, if $\D$ is braided then ${\rm Pic}(\D)$ is naturally identified with the subgroup of $\Aut^{\rm br}(\mathcal{Z}(\D))$ of elements 
that preserve $\D\subset {\mathcal Z}(\D)$ and have trivial restriction to ${\mathcal D}$. Thus, (ii) follows from (i) and 
Theorem \ref{t7}. Also, recall that $\pi_2(\underline{\rm Pic}(\D))={\rm Inv}(\D)$, the group of isomorphism 
classes of invertible objects of $\D$. Thus, at the level of 2-morphisms $i$ comes from the obvious isomorphism 
${\rm Inv}(\C)\cong {\rm Inv}(\widehat \C)$, which gives (ii). 
\end{proof} 

Note that in Corollary \ref{t9}, $i$ does not define an isomorphism of $3$-groups, since $\pi_3$ of these 3-groups is the multiplicative group of the ground field, 
and $k^\times \ncong \overline K^\times$. However, we can lift $i$ to an injection at the level of 3-morphisms. For simplicity assume that $k=\overline {\Bbb F}_p$ 
(this is not restrictive since by \cite{ENO}, Theorem 2.31, any separable fusion category in characteristic $p$ is defined over $\overline {\Bbb F}_p$). 
Then by Hensel's lemma, the surjection $W(k)^\times \to k^\times$ defined by reduction modulo $p$ uniquely splits, since all elements of $k^\times$ are roots of unity of order coprime to $p$ (the Brauer lift, cf. Remark \ref{nik}). Then $i$ extends to a morphism of $3$-groups using the corresponding splitting $\beta: k^\times \to W(k)^\times \subset \overline K^\times$. 
In particular, using the main results of \cite{ENO2}, this implies the following result. 

\begin{theorem}\label{t11} Let $G$ be a finite group and $k=\overline{\Bbb F}_p$. 
Then any \linebreak $G$-extension of $\C$ canonically lifts to a $G$-extension of $\widehat \C$, and 
any braided $G$-crossed category $\D$ with $\D_1=\C$ canonically lifts to a braided \linebreak $G$-crossed category 
$\widehat \D$ with $\widehat \D_1=\widehat \C$.  
\end{theorem}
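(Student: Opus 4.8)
The plan is to reduce Theorem \ref{t11} to the structural results on $G$-extensions and braided $G$-crossed categories from \cite{ENO2} together with Corollary \ref{t9}. Recall from \cite{ENO2} that for a separable fusion category $\C$, the data of a $G$-extension of $\C$ is equivalent to that of a monoidal $2$-functor $\underline{G}\to \underline{\underline{\rm BrPic}}(\C)$ from the discrete monoidal $2$-category associated to $G$ (i.e., a homomorphism $G\to \pi_1(\underline{\underline{\rm BrPic}}(\C))={\rm BrPic}(\C)$ together with compatible choices of the associated $2$- and $3$-cocycle data valued in $\pi_2={\rm Inv}(\mathcal{Z}(\C))$ and $\pi_3=k^\times$, subject to the vanishing of certain obstructions in $H^3(G,{\rm Inv}(\mathcal{Z}(\C)))$ and $H^4(G,k^\times)$). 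Similarly, braided $G$-crossed categories $\D$ with $\D_1=\C$ correspond to monoidal $2$-functors $\underline{G}\to \underline{\underline{\rm Pic}}(\C)$. So the entire classifying datum lives in the $3$-group $\underline{\underline{\rm BrPic}}(\C)$ (resp. $\underline{\underline{\rm Pic}}(\C)$), and the assertion is that lifting transports it canonically to the analogous datum over $\widehat\C$.

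First I would invoke Corollary \ref{t9}: the lifting morphism $i$ is an isomorphism of the underlying $2$-groups of $\underline{\underline{\rm BrPic}}(\C)$ and $\underline{\underline{\rm BrPic}}(\widehat\C)$, and likewise in the braided case. This already carries over the homomorphism $G\to {\rm BrPic}(\C)$ and, since $\pi_2$ is matched by the canonical isomorphisms ${\rm Inv}(\mathcal{Z}(\C))\cong {\rm Inv}(\mathcal{Z}(\widehat\C))$ and ${\rm Inv}(\C)\cong {\rm Inv}(\widehat\C)$ (both coming from the equality of Grothendieck data for $\C$ and $\widehat\C$), the $2$-cocycle layer of the extension datum as well. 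Next I would handle $\pi_3$: here $i$ is \emph{not} an isomorphism of $3$-groups because $\pi_3=k^\times$ does not map isomorphically to $\overline K^\times$. This is dealt with exactly as in the paragraph preceding the theorem: reduce to $k=\overline{\mathbb F}_p$ (allowed by \cite{ENO}, Theorem 2.31), and use the Brauer lift $\beta: k^\times\hookrightarrow W(k)^\times\subset \overline K^\times$, the unique splitting of $W(k)^\times\to k^\times$ guaranteed by Hensel's lemma since $k^\times$ consists of roots of unity of order coprime to $p$. Using $\beta$, the morphism $i$ of underlying $2$-groups extends to a genuine morphism of $3$-groups $\widehat{i}: \underline{\underline{\rm BrPic}}(\C)\to \underline{\underline{\rm BrPic}}(\widehat\C)$ (and similarly for $\underline{\underline{\rm Pic}}$), which on $\pi_1,\pi_2$ is the isomorphism above and on $\pi_3$ is the injection $\beta$.

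Then the proof concludes formally: composing the classifying monoidal $2$-functor $\underline{G}\to \underline{\underline{\rm BrPic}}(\C)$ with $\widehat{i}$ yields a monoidal $2$-functor $\underline{G}\to \underline{\underline{\rm BrPic}}(\widehat\C)$, hence — again by \cite{ENO2} — a $G$-extension of $\widehat\C$; and this construction is canonical because $\widehat{i}$ is (the only choice made, $\beta$, is itself canonical by uniqueness in Hensel's lemma). The braided $G$-crossed case is identical with $\underline{\underline{\rm Pic}}$ in place of $\underline{\underline{\rm BrPic}}$, using part (ii) of Corollary \ref{t9}. One should also note that the obstruction classes in $H^3(G,\pi_2)$ and $H^4(G,\pi_3)$ governing existence are transported compatibly: the $\pi_2$-obstruction is matched isomorphically, and the $\pi_3$-obstruction, which vanishes for the original extension, maps under $\beta_*: H^4(G,k^\times)\to H^4(G,\overline K^\times)$ to the obstruction for the lift, so it vanishes too. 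Hence a lift exists and is canonical.

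I expect the main obstacle to be purely bookkeeping rather than mathematical: one must check that the notion of ``monoidal $2$-functor from $\underline{G}$'' used to encode extensions is genuinely functorial in the target $3$-group, so that post-composition with $\widehat{i}$ makes sense and preserves all the coherence data; and one must verify that the various identifications of $\pi_2$ (via Grothendieck rings of centers) are compatible with the ${\rm BrPic}$-action appearing in the $2$-cocycle condition, not merely as abstract groups. Once those compatibilities are in place — and they follow from the explicit nature of the equivalence $\xi:{\rm BrPic}(\D)\cong\Aut^{\rm br}(\mathcal Z(\D))$ and its behavior under lifting already used in Corollary \ref{t9} — the result is immediate.
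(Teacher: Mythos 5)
Your proposal is correct and follows exactly the route the paper intends: the paper derives Theorem \ref{t11} directly from the preceding paragraph (Corollary \ref{t9} for the underlying $2$-groups plus the Brauer-lift splitting $\beta$ extending $i$ to a morphism of $3$-groups) combined with the classification of $G$-extensions and braided $G$-crossed categories via monoidal $2$-functors to $\underline{\underline{\rm BrPic}}$ and $\underline{\underline{\rm Pic}}$ from \cite{ENO2}. Your write-up simply makes explicit the composition and obstruction-transport steps that the paper leaves implicit.
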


\begin{remark} One can propose the following definition (which we are not making completely precise here). 
Recall that a  {\it linear algebraic structure} is defined by a colored PROP $\bold P$ (say, over $\Bbb Z$), see e.g. \cite{YJ} and references therein. 
Realizations of $\bold P$ over a commutative ring $R$ are then $\bold P$-algebras over $R$. 
We call an algebraic structure $\bold P$ {\it 3-separable} 
if every finite-dimensional realization $A$ of $\bold P$ over a field $k$ 

(i) has a finite and reduced group of automorphisms (i.e., has no nontrivial derivations); 

(ii) has no nontrivial first order deformations; 

(ii) has a vanishing space of obstructions to deformations.

These conditions should be expressed as requiring that $H^i(A)=0$ for $i=1,2,3$ 
for an appropriate cohomology theory, controlling deformations of $A$. 
Then $A$ should admit a unique lifting from a field $k$ of characteristic $p$ to $W(k)$, 
and this lifting should have the faithfulness and stabilizer 
integrality properties similar to Theorems \ref{t1}, \ref{t2}, \ref{t4}, \ref{t5}, \ref{t7}: any isomorphism $g$ of 
liftings of $A_1,A_2$ over $\overline K$ is defined over $W(k)$ and hence is a  lifting of an isomorphism $g_0: A_1\to A_2$.    
We have seen a number of examples of 3-separable structures: 
semisimple cosemisimple (quasitriangular, triangular) Hopf algebras, separable (braided, symmetric) fusion categories, 
(braided) tensor functors between such categories.\footnote{Semisimplicity/cosemisimplicity for Hopf algebras and separability for fusion categories may be forced
in the setting of linear algebraic structures by adding an auxiliary variable $x$ and the relation $dx=1$, where $d$ is the global dimension.} 
It would therefore be interesting to make this notion more precise, and prove a general theorem 
on the existence and faithfulness of the lifting for 3-separable structures, which would unify the results of \cite{ENO}, Section 9, \cite{EG}, and this paper. 
It would also be interesting to find other examples of 3-separable structures. 
\end{remark} 

\section{Descent of tensor functors between separable fusion categories to characteristic $p$.}

\subsection{Separability of subcategories and quotient categories}

We will first prove separability of subcategories and quotients of separable categories.
First we need the following result, which is a generalization of 
\cite{EO}, Theorem 2.5. 

\begin{theorem}\label{surjexac} Let $\C$ be a finite tensor category 
and $\D$ a finite indecomposable multitensor category. Let $F:\C\to \D$ be a quasi-tensor functor. 
If $F$ is surjective then 

(i) $F$ maps projective objects to projective ones; and  
 
(ii) $\D$ is an exact module category over $\C$. 
\end{theorem}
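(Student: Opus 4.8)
\textbf{Proof plan for Theorem \ref{surjexac}.}

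The plan is to reduce both statements to the already-known case of \cite{EO}, Theorem 2.5, where the target is a (multi)tensor category equipped with a surjective quasi-tensor functor and one controls projectivity via the existence of a fiber functor / well-behaved internal Hom. First I would recall that a quasi-tensor functor $F$ has an underlying exact functor with both adjoints, since in a finite tensor category every object has a projective cover and $F$, being a tensor functor up to associativity, admits a right adjoint $F^{!}$ (and, by rigidity, also a left adjoint $F^{*}$) obtained from the internal Hom structure. The surjectivity hypothesis means that every object of $\D$ is a subquotient of $F(X)$ for some $X\in\C$; combined with the fact that $\C$ has enough projectives, one gets that $\D$ has enough objects of the form $F(P)$ with $P$ projective, so (i) and (ii) are really two facets of the same assertion, and once (i) is established, (ii) follows by the standard criterion that a module category is exact iff $F$ carries projectives of $\C$ to projectives of $\D$ (this is exactly how exactness is detected in \cite{EO}).

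The core step is (i): showing $F(P)$ is projective in $\D$ when $P$ is projective in $\C$. Here I would use the adjunction $\Hom_{\D}(F(P),-)\cong \Hom_{\C}(P,F^{!}(-))$ together with the key input that $F^{!}$ is exact. Exactness of $F^{!}$ is where the hypotheses on $\D$ enter: since $\D$ is a finite indecomposable multitensor category it is a Frobenius category (projectives = injectives), so the right adjoint $F^{!}$, being a right adjoint of an exact functor between finite abelian categories, is left exact, and one upgrades this to full exactness using that $F$ also has a left adjoint, i.e.\ $F^{!}$ itself has a right adjoint and hence preserves epimorphisms. Granting exactness of $F^{!}$, the composite $\Hom_{\C}(P,F^{!}(-))$ is exact, hence $\Hom_{\D}(F(P),-)$ is exact, which is precisely projectivity of $F(P)$. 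This is the mechanism of \cite{EO}, Theorem 2.5, extended from the tensor case to the multitensor case of $\D$.

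For (ii), once (i) holds, exactness of $\D$ as a $\C$-module category amounts to checking that $P\otimes M$ is projective in $\D$ for every projective $P\in\C$ and every $M\in\D$; but $P\otimes M$ is a summand of $F(P)\otimes M$ which, since $F(P)$ is projective in $\D$ and $\D$ is multitensor (so tensoring with a projective yields a projective, by rigidity and the Frobenius property), is projective. The indecomposability of $\D$ guarantees this module category is indecomposable and that the component-wise analysis is uniform. I would also need to observe that surjectivity of $F$ makes $\D$ a \emph{faithful} $\C$-module category, so the notion of exactness is the expected one.

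The main obstacle I anticipate is establishing full exactness (not just one-sided exactness) of the adjoint functor $F^{!}$ in the multitensor setting: the target $\D$ being merely indecomposable multitensor rather than tensor means the unit object $\be_{\D}$ is not simple, so one must be careful that "projective = injective" still holds componentwise and that the internal-Hom adjunction behaves well across the different components $\D_{ij}$. The surjectivity of $F$ should force $\D$ to actually be connected in the right sense so that these component issues do not obstruct the argument, but verifying this cleanly — essentially checking that $\D$ being indecomposable plus receiving a surjective quasi-tensor functor from a genuine tensor category forces the Frobenius/self-injectivity input to propagate — is the delicate point, and is presumably exactly where the generalization beyond \cite{EO}, Theorem 2.5 requires new work.
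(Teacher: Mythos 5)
Your proof of part (i) has a genuine gap at its core step. You reduce projectivity of $F(P)$ to exactness of the right adjoint $F^{!}$ via $\Hom_{\D}(F(P),-)\cong\Hom_{\C}(P,F^{!}(-))$; this reduction is fine, but it is an exact restatement of (i) rather than a simplification: for finite (multi)tensor categories, $F^{!}$ is exact if and only if $F$ preserves projectives. The justification you then offer for exactness of $F^{!}$ --- that since $F$ also has a left adjoint, ``$F^{!}$ itself has a right adjoint and hence preserves epimorphisms'' --- is incorrect. The chain of adjunctions is $F^{*}\dashv F\dashv F^{!}$; the existence of $F^{*}$ says nothing about $F^{!}$ having a right adjoint (equivalently, being right exact). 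Worse, your argument for this step never uses the surjectivity of $F$, and the conclusion is false without it: for the trivial-representation embedding $F:\Vec_k\hookrightarrow\Rep_k(\Z/p)$ in characteristic $p$, $F$ is an exact tensor functor with both adjoints, $F^{!}=(-)^{\Z/p}$ is not exact, and $F(\be)=\be$ is not projective. Surjectivity must enter the proof of (i) in an essential, quantitative way, and your plan gives it no place to do so.

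The paper's actual argument (following \cite{EG3}, Theorem 2.9) is a Frobenius--Perron computation in the Grothendieck ring: write $F(P_i)=T_i\oplus N_i$ with $T_i$ projective and $N_i$ without projective summands; since projectives form a tensor ideal in $\D$, the non-projective part of $F(P_i\otimes P_j)=\oplus_r c_{ij}^r F(P_r)$ must sit inside $(\oplus_i N_i)\otimes N_j$, and pairing with Frobenius--Perron dimensions yields $[\oplus_i N_i][Y]\ge(\sum_i \FPdim(P_i))[Y]$ on a suitable component. Surjectivity of $F$ makes the matrix of $[\oplus_i F(P_i)]$ strictly positive, so the Frobenius--Perron theorem forces $N_i=F(P_i)$ for all $i$; then surjectivity again, combined with the quasi-Frobenius property of $\D$ (indecomposable projectives are injective, hence direct summands of any object containing them as a subquotient, in particular of some $F(P_i)$), gives a contradiction. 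Your treatment of (ii) given (i) is essentially correct (note only that the module action is $P\cdot M=F(P)\otimes M$, so it equals, rather than merely embeds as a summand of, $F(P)\otimes M$), but (i) itself requires the positivity argument, not adjoint-functor formalism.
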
 

\begin{proof} (i) The proof is almost identical to the proof of \cite[Theorem 2.9]{EG3}. 
We reproduce it here for the convenience of the reader. 

Let $P_i$ be the indecomposable projectives of $\C$. Write $F(P_i)=T_i\oplus N_i$, where $T_i$ is projective, and $N_i$ has no projective direct summands. 
Our job is to show that $N_i=0$ for all $i$. So let us assume for the sake of contradiction 
that $N_p\ne 0$ for some $p$. 

Let $P_i\otimes P_j=\oplus_r c_{ij}^r P_r$. Since the tensor product of a projective object with any object in $\D$ is projective, the objects $T_i\otimes T_j$, $T_i\otimes N_j$, and $N_i\otimes T_j$ are projective.
Thus, 
\begin{equation}\label{incluu}
(\oplus_i N_i)\otimes N_j\supset \oplus_r (\sum_i c_{ij}^r) N_r
\end{equation}
 as a direct summand. 
 
 Let $\bold 1=\sum_l \bold 1_l$ be the irreducible decomposition of the unit object of $\D$, 
and let $s$ be such that $Y:=\sum_r d_rN_r\bold 1_s$ is nonzero (it exists since $N_p\ne 0$).
Also, let $X_j$ be the simples of $\C$, and $d_j$ be their Frobenius-Perron dimensions. 
For any $i$, $r$, we have $\sum_j d_jc_{ij}^r=D_id_r$, where $D_i:=\FPdim(P_i)$. 
Thus, tensoring inclusion \eqref{incluu} on the right by $\bold 1_s$, 
multiplying by $d_j$ and summing over $j$,
we get a coefficient-wise inequality  
$$
[\oplus_i N_i][Y]\ge (\sum_i D_i)[Y]
$$
in ${\rm Gr}(\D_s)$, where $\D_s:=\D \bold \otimes 1_s$. 
This implies that the largest eigenvalue of the matrix of $[\oplus_i N_i]$ on ${\rm Gr}(\D_s)$
is at least $\sum_i D_i$, which is the same as the largest eigenvalue of $[\oplus_i F(P_i)]$.
Since $F$ is surjective, all the entries of $[\oplus_i F(P_i)]$ are positive. 
Thus, by the Frobenius-Perron theorem (see Lemma 2.1 of \cite{EG3}), $[\oplus_i N_i]=[\oplus_i F(P_i)]$. This implies that $N_i=F(P_i)$ for all $i$. Thus, $F(P_i)$ has no nonzero projective direct
summands for all $i$. 

However, let $Q$ be an indecomposable projective object in $\D$. 
Then $Q$ is injective by the quasi-Frobenius property of finite multitensor categories. 
Since $F$ is surjective, $Q$ is a subquotient, hence a direct summand 
of $F(P)$ for some projective $P\in \C$. Hence $Q$ is a direct summand 
of $F(P_i)$ for some $i$, which gives the desired contradiction. 

(ii) This follows from (i) and the fact that in a multitensor category, the tensor product of a projective object with any object is projective. 
\end{proof}

\begin{theorem}\label{quo} Let $\C$, $\D$ be multitensor categories and let
$F: \C\to \D$ be a surjective tensor functor. If $\C$ is separable, then so is $\D$. In other words, a quotient category of a separable multifusion category is separable. 
\end{theorem}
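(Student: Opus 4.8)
The plan is to deduce this from Theorem \ref{surjexac}. Recall that a multifusion category $\C$ is separable if and only if it is an exact (equivalently, separable) module category over itself, or — more usefully here — if and only if $\C$ is separable as an algebra object in a suitable sense; concretely, by \cite{DSS}, Theorem 3.6.7, separability of a fusion category is equivalent to nonvanishing of the global dimension, and more generally a multifusion category is separable iff it is a semisimple bimodule category over itself, i.e. iff the regular $\C$-bimodule category is exact. So the first step is to reduce the statement to showing that $\D$ is an exact $\D$-bimodule category, using the characterization of separability in terms of exactness of the regular bimodule.

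Next I would exploit surjectivity of $F: \C \to \D$. Since $F$ is a surjective tensor functor between multitensor categories and $\C$ is separable, $\C$ is in particular a finite (indeed multifusion) category, and $\D$ is a finite indecomposable multitensor category on each component; Theorem \ref{surjexac}(ii) then tells us that $\D$, viewed as a module category over $\C$ via $F$, is an exact $\C$-module category. The key point is to transfer exactness along $F$: because $F$ is surjective, every object of $\D$ is a subquotient of $F(X)$ for some $X \in \C$, and Theorem \ref{surjexac}(i) says $F$ sends projectives to projectives. Using this, one shows that projectivity in the $\D$-bimodule sense is detected by projectivity in the $\C$-module sense pulled back along $F \boxtimes F$: a projective object of $\D$ tensored (on either side, in $\D$) with any object of $\D$ stays projective, and surjectivity lets us realize arbitrary objects of $\D$ as summands of images of objects of $\C$, so the exactness of $\D$ as a $\C$-bimodule (which follows from part (ii) applied to $\C \boxtimes \C^{\rm op} \to \D \boxtimes \D^{\rm op}$, still surjective) upgrades to exactness of $\D$ as a $\D$-bimodule.

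Then I would translate exactness of the regular $\D$-bimodule back into separability. Concretely, $\D$ is separable iff $\D \boxtimes \D^{\rm op}$-mod, i.e. the category of $\D$-bimodules, is semisimple, iff $\D$ is a projective object in $\D$-bimodules, iff the multiplication functor $\D \boxtimes \D^{\rm op} \to \D$ admits a bimodule splitting; all of these are equivalent to exactness of the regular bimodule category, which we have just established. Alternatively, and perhaps more cleanly, one argues at the level of global (or Frobenius–Perron) dimensions: the image $F(R_\C)$ of a separability idempotent / regular algebra structure provides a nonzero element witnessing $\dim(\D) \neq 0$ on each component, using that $F$ is a tensor functor and hence compatible with the relevant categorical traces.

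The main obstacle, I expect, is the precise bookkeeping in the bimodule transfer step: one must be careful that $\D$ is only assumed to be a multitensor category (so it decomposes into components $\D_{ij}$), that Theorem \ref{surjexac} is stated for $\D$ finite indecomposable, and that "surjective tensor functor" behaves well under $-\boxtimes(-)^{\rm op}$ and under passing to components. Handling the multifusion (as opposed to fusion) case therefore requires decomposing along the component structure, verifying $F$ restricts appropriately, and checking that separability of each component $\C$-component implies separability of each component $\D$-component — which is where the indecomposability hypothesis of Theorem \ref{surjexac} gets used and must be arranged by restricting $F$ to the relevant blocks.
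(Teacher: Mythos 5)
There is a genuine gap, and it sits exactly at the point where separability (rather than mere semisimplicity) has to be produced. Your first two steps are fine and agree with the paper: Theorem \ref{surjexac}(ii) makes $\D$ an exact module category over the (multi)fusion category $\C$, hence semisimple, and $F\boxtimes F:\C\boxtimes\C^{\rm op}\to\D\boxtimes\D^{\rm op}$ is again surjective. But your pivot --- that $\D$ is separable if and only if the regular $\D$-bimodule category is \emph{exact} --- is false, and the ``upgrade'' step built on it proves nothing new. Once $\D$ is semisimple, $\D\boxtimes\D^{\rm op}$ is semisimple and $\D$ is automatically an exact (indeed semisimple) module category over it; every object is projective. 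In characteristic $p$ this is strictly weaker than separability: there exist semisimple fusion categories of global dimension zero, and these are precisely the non-separable ones. Separability in the sense of \cite{DSS} is equivalent to semisimplicity of the Drinfeld center $\mathcal{Z}(\D)$ (equivalently, of the category of $\D$-\emph{bimodule endofunctors} of $\D$, or of the dual categories with respect to semisimple module categories), not to exactness of the regular bimodule, so your transfer of ``projectivity in the $\D$-bimodule sense'' along $F\boxtimes F$ never engages the actual content of the statement. Note also that your argument so far has only used that $\C$ is a fusion category; the hypothesis that $\C$ is \emph{separable} has not been used anywhere essential, which is a sure sign something is missing.

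The step you need is the one the paper supplies by duality: dualize the surjective functor $F\boxtimes F$ with respect to the semisimple module category $\D$ to get, by \cite{ENO}, Proposition 5.3, a fully faithful embedding $\mathcal{Z}(\D)\hookrightarrow(\C\boxtimes\C^{\rm op})^*_{\D}$. The target is semisimple precisely because $\C$ is separable and $\D$ is semisimple --- this is where separability of $\C$ enters --- so $\mathcal{Z}(\D)$ is semisimple, and \cite{DSS}, Corollary 3.5.9 then gives separability of $\D$. (The reduction to indecomposable $\D$ and to $\bold 1_\C$ simple via the components $\C_{ii}\to F(\bold 1_i)\otimes\D\otimes F(\bold 1_i)$, which you flag as bookkeeping, is indeed how the paper handles the multifusion case.) Your suggested alternative via global dimensions is also not substantiated: there is no a priori identity expressing $\dim(\D)$ as a factor of $\dim(\C)$ in characteristic $p$ without first knowing semisimplicity of the relevant dual categories, which is again the separability input you are trying to establish.
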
 

\begin{proof} Consider first the special case when $\C$ is a tensor category (i.e., $\bold 1\in \C$ is simple). Without loss of generality, we may assume that $\D$ is indecomposable. By Theorem \ref{surjexac}, $\D$ is an exact $\C$-module category, hence semisimple (as $\C$ is semisimple). 
Moreover, we have the surjective tensor functor
$F\boxtimes F: \C\boxtimes \C^{\rm op}\to \D\boxtimes \D^{\rm op}.$
Let us take the dual of this functor with respect to $\D$. By \cite{ENO}, Proposition 5.3, we get
an injective tensor functor (i.e., a fully faithful embedding) $$(F\boxtimes F)^*_\D:  \mathcal{Z}(\D)\hookrightarrow 
(\C\boxtimes \C^{\rm op})_\D^*.$$ But the category $(\C\boxtimes \C^{\rm op})_\D^*$ is semisimple, since $\C$ is separable and $\D$ is semisimple. Hence, ${\mathcal Z}(\D)$ is semisimple. Thus, by Corollary 3.5.9 of \cite{DSS}, $\D$ is separable.

The general case now follows by applying the above special case to the surjective tensor functors 
$F_i: \C_{ii}\to F(\bold 1_i)\otimes \D\otimes F(\bold 1_i)$, where $\bold 1_i$ are the simple composition factors of $\bold 1$ in $\C$, and $\C_{ii}:=\bold 1_i\otimes \C\otimes \bold 1_i$. 
\end{proof} 

The following theorem resolves an open question in \cite{ENO}, Subsection 9.4: 

\begin{theorem}\label{subca} A (full) multitensor subcategory of a separable multifusion category is separable.
\end{theorem}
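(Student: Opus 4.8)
The strategy is to reduce the statement about subcategories to the statement about quotient categories, which was just proved in Theorem \ref{quo}. The key observation is a duality: if $\D \subset \C$ is a full multitensor subcategory of a separable multifusion category $\C$, then separability of $\D$ is detected by semisimplicity of its Drinfeld center $\mathcal{Z}(\D)$ (via Corollary 3.5.9 of \cite{DSS}), and one can hope to realize $\mathcal{Z}(\D)$, or a category controlling it, as a quotient of a center-type category built from $\C$, to which Theorem \ref{quo} applies.

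\textbf{Key steps.} First I would reduce to the fusion case: writing $\bold 1 = \sum_i \bold 1_i$ in $\C$ and passing to the components $\C_{ii} = \bold 1_i \otimes \C \otimes \bold 1_i$ and the corresponding components of $\D$, it suffices to treat the case where $\C$ is a fusion category and $\D \subset \C$ a full fusion subcategory. Second, I would consider $\C$ as a module category over $\D \boxtimes \D^{\op}$ via the inclusion $\D \boxtimes \D^{\op} \hookrightarrow \C \boxtimes \C^{\op}$ followed by the action of $\C \boxtimes \C^{\op}$ on $\C$; since $\C$ is semisimple this module category is semisimple. The dual category $(\D \boxtimes \D^{\op})_\C^*$ is then a semisimple multifusion category (semisimplicity of the dual of a separable category over a semisimple module category is exactly the input used in the proof of Theorem \ref{quo}, via separability being preserved under taking duals — \cite{ENO}, \cite{DSS}). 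Third — and this is the crux — I would exhibit $\mathcal{Z}(\D)$ as a quotient (surjective tensor image), or at least as a full tensor subcategory with semisimple ambient, of this semisimple category $(\D \boxtimes \D^{\op})_\C^*$, or alternatively identify $(\D \boxtimes \D^{\op})_\C^*$ with a category of the form $\End_{\D\text{-bimod}}(\C)$ and relate its semisimplicity to that of $\mathcal{Z}(\D)$ through the fact that $\C$ contains $\D$ as a bimodule direct summand (via $\bold 1 \in \C$, using rigidity/semisimplicity to split the multiplication $\D \otimes \D \to \D$). Once $\mathcal{Z}(\D)$ is a subquotient of a semisimple multifusion category, it is itself semisimple, and Corollary 3.5.9 of \cite{DSS} gives separability of $\D$.

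\textbf{Main obstacle.} The delicate point is the third step: producing the right surjective tensor functor onto (something controlling) $\mathcal{Z}(\D)$. Theorem \ref{quo} only gives separability downstream along a \emph{surjective tensor functor}, so I need $\mathcal{Z}(\D)$, not merely a subcategory, to appear as a quotient. The natural candidate is to use the unit $\bold 1_\C \in \C$: the algebra $A = \bold 1_\C$ regarded inside the regular $\D$-bimodule category, or the canonical Lagrangian-type algebra in $\mathcal{Z}(\D)$ associated to $\C$, should yield a dominant functor $\mathcal{Z}(\D) \to$ (a component of) $(\D\boxtimes\D^{\op})_\C^*$ whose target is semisimple — but checking dominance/surjectivity and that the relevant component is nonzero requires care, and one must verify the functor is genuinely tensor (not merely quasi-tensor). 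An alternative route that sidesteps surjectivity is: since $\C$ is separable, $\mathcal{Z}(\C)$ is semisimple; the forgetful-type functor and the embedding $\D \hookrightarrow \C$ induce a relation $\mathcal{Z}(\D) \to {}_{\D}\mathcal{Z}(\C)_{\D}$-type category that is fully faithful with semisimple target, again giving semisimplicity of $\mathcal{Z}(\D)$ directly. I would pursue whichever of these makes the semisimplicity of the ambient category most transparent, falling back on the module-category duality of Theorem \ref{quo} as the engine.
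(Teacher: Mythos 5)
Your overall strategy --- use module-category duality to turn the inclusion into a surjection so that Theorem \ref{quo} can be applied --- is the right engine, and it is the one the paper uses. But as written the proposal has a genuine gap, in two places. First, your justification of the semisimplicity of $(\D\boxtimes\D^{\rm op})^*_{\C}$ is circular: the principle ``the dual of a separable category with respect to a semisimple module category is semisimple'' would have to be applied to $\D\boxtimes\D^{\rm op}$, whose separability is exactly what is to be proved. The non-circular route is to dualize the inclusion $\D\boxtimes\D^{\rm op}\hookrightarrow\C\boxtimes\C^{\rm op}$ with respect to the module category $\C$, obtaining by \cite{ENO}, Proposition 5.3 a \emph{surjective} tensor functor $\mathcal{Z}(\C)=(\C\boxtimes\C^{\rm op})^*_{\C}\to(\D\boxtimes\D^{\rm op})^*_{\C}$ whose source is separable, and then invoke Theorem \ref{quo}. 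Second, the step you yourself flag as the crux --- exhibiting $\mathcal{Z}(\D)$ as a subquotient of $(\D\boxtimes\D^{\rm op})^*_{\C}$ --- is left open. It can be repaired (since $\D$ is a full tensor subcategory and $\C$ is semisimple, $\C$ splits as $\D$-bimodule into $\D$ plus a complement, so $\mathcal{Z}(\D)=\mathrm{Fun}_{\D\text{-bimod}}(\D,\D)$ is a corner component of $\mathrm{Fun}_{\D\text{-bimod}}(\C,\C)=(\D\boxtimes\D^{\rm op})^*_{\C}$, and components of separable multifusion categories are separable), but no proof of this is given in the proposal, so the argument is incomplete as it stands.

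The paper's proof is substantially shorter and bypasses the Drinfeld center entirely. In your notation ($\D\subset\C$ with $\C$ separable): view $\C$ as a module category over $\D$ via the inclusion $F:\D\hookrightarrow\C$; by \cite{ENO}, Proposition 5.3 the dual functor $F^*_{\C}:\C\to\D^*_{\C}$ is a surjective tensor functor, so Theorem \ref{quo} gives that $\D^*_{\C}$ is separable; finally $\D\cong(\D^*_{\C})^*_{\C}$ is separable because separability is preserved under taking duals with respect to a semisimple module category. I would either rework your argument along these lines or supply the two missing pieces identified above.
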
 

\begin{proof} Let $F: \C\hookrightarrow \D$ be an inclusion of $\C$ into a separable category $\D$. Then by \cite{ENO}, 
Proposition 5.3, we have a surjective tensor functor $F^*_\D: \D\to \C_\D^*$, where $\C_\D^*$ is a multitensor category. By Theorem \ref{quo}, $\C_\D^*$ is separable. Hence $\D=(\C_\D^*)_\D^*$ is separable as well. 
\end{proof} 

\subsection{Descent of tensor functors} 

\begin{theorem}\label{t7a}
Let $\C_1,\C_2,\D$ be separable multifusion categories over $k$. Let 
$F_i: \C_i\to \D$ be tensor functors. 
Let $F: \widehat{\C_1}\to \widehat{\C_2}$ 
be a tensor functor such that 
$\widehat F_2\circ F\cong \widehat F_1$. Then 
there exists a tensor functor 
$F_0: \C_1\to \C_2$ such that $F_2\circ F_0\cong F_1$, 
and $F\cong \widehat F_0$. In other words, if tensor functors $G$ and $G\circ F$ are integral then $F$ is integral. 
\end{theorem}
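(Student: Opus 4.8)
The plan is to follow the pattern of the proofs of Theorems \ref{t5} and \ref{t7}, adding two ingredients: separability of images (Theorem \ref{subca}), used to reduce to the case of surjective functors, and an \emph{equivariant descent of integrality} along the witness functor $F_2$, which plays the role that reductivity plays in Propositions \ref{ag} and \ref{ag2}.

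First I would reduce to the case in which $F$, $F_1$ and $F_2$ are all surjective. The ``image'' of $F$, i.e. the smallest full fusion subcategory of $\widehat{\C_2}$ containing all objects $F(X)$, corresponds to the based fusion subring of $\Gr(\widehat{\C_2})=\Gr(\C_2)$ generated by the classes $[F(X)]$; by the argument of Lemma \ref{bije} (lifting is a bijection on fusion subcategories, via Grothendieck rings) it equals the lift $\widehat{\C_2'}$ of a fusion subcategory $\C_2'\subseteq\C_2$, which is separable by Theorem \ref{subca}. Replacing $\C_2$ by $\C_2'$ and $F_2$ by its restriction I may assume $F$ surjective; replacing $\D$ by the image of $F_2$ (separable by Theorem \ref{subca}) I may assume $F_2$ surjective; and then, since $F$ is surjective and $\widehat{F_1}\cong\widehat{F_2}\circ F$, the images of $\widehat{F_1}$ and $\widehat{F_2}$ coincide, so $F_1$ is surjective as well. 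By Theorem \ref{surjexac}, $\widehat{\C_2}$ and $\widehat\D$ are then exact module categories over $\widehat{\C_1}$.

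Next I would set up the affine‑geometric picture as in Theorems \ref{t1}, \ref{t4}, \ref{t7}. Fix the underlying additive functor $\overline F$ of $F$; let $\bold G=\Aut(\overline F)$, a split reductive group over $W(k)$ (a product of general linear groups of multiplicity spaces), acting by gauge transformations on the affine space $\bold V$ of pre‑tensor structures on $\overline F$, with the closed $\bold G$‑invariant subscheme $\bold Y\subset\bold V$ of genuine tensor structures. The key point is that composition with the fixed functor $F_2$ defines a $\bold G$‑equivariant morphism of affine $W(k)$‑schemes $c:\bold V\to\bold W$, where $\bold W$ is the analogous space of pre‑tensor structures on $\overline{F_1}=\overline{F_2}\,\overline F$ and $\bold G$ acts on $\bold W$ through $g\mapsto F_2(g)\in\Aut(\overline{F_1})$. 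Now $F$ determines a point $v\in\bold V(\overline K)$, a priori only over $\overline K$ (this is exactly the difficulty), and the hypothesis $\widehat{F_2}\circ F\cong\widehat{F_1}$ says precisely that $c(v)$ is $\Aut(\overline{F_1})(\overline K)$‑conjugate to the point $w_1\in\bold W(W(k))$ defining the lift $\widehat{F_1}$. Hence for every $\Aut(\overline{F_1})$‑invariant regular function $h$ on $\bold W$ defined over $W(k)$, the function $c^\ast h$ on $\bold V$ is $\bold G$‑invariant and $(c^\ast h)(v)=h(c(v))=h(w_1)\in W(k)$: all invariants of this kind are integral at $v$. Using power reductivity (Proposition \ref{FKprop}) and Haboush's theorem, exactly as in the second proof of Proposition \ref{ag} and in the proof of Proposition \ref{ag2}, one deduces that $v$ is integral, i.e. after conjugating by an element of $\bold G(\overline K)$ we may take $v\in\bold Y(Q)$ for a finite extension $Q$ of $W(k)$. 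Its reduction then yields a tensor functor $F_0:\C_1\to\C_2$ (the reduction descends to $k$ since $k$ is algebraically closed and separable categories are defined over $\overline{\F}_p$; here one uses that $\Aut(F_0)$ is finite, so the orbit of the reduced point is closed, as in Lemma \ref{clo2}), and $F\cong\widehat{F_0}$ as in the proof of Theorem \ref{t5} (both are lifts of $F_0$). Finally $\widehat{F_2}\,\widehat{F_0}=\widehat{F_2\circ F_0}\cong\widehat{F_2}\circ F\cong\widehat{F_1}$, so Theorem \ref{t7} gives $F_2\circ F_0\cong F_1$.

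The main obstacle is the step ``$v$ is integral'': one must show that the $\bold G$‑invariants pulled back along $c$ from the witness side --- where integrality is automatic --- generate a subalgebra over which the matrix coefficients of $v$ (in a suitable basis) are integral. This is where reductivity of $\bold G$ and geometric/power reductivity are indispensable: as illustrated in Remark \ref{finiteness}, the analogous descent of integrality fails for a non‑reductive gauge group. Concretely one argues as in the proof of Proposition \ref{ag2}, with $c$ in place of the orbit map there, using that $F_2$ (hence $c$) is surjective so that $c$ has the requisite finiteness properties along $\bold G$‑orbits; this is the only place where the hypothesis that $G$ and $G\circ F$ are integral is genuinely used, and it is what makes the argument non‑vacuous.
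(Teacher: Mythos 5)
Your reduction to the case where $F$, $F_1$, $F_2$ are all surjective is correct and is exactly how the paper begins (image of $F$ is a lift of a fusion subcategory, separable by Theorem \ref{subca}; replace $\D$ by ${\rm Im}F_2$). But the heart of your argument --- the step ``$v$ is integral'' --- is not a proof; it is the entire content of the theorem, and the analogy you invoke does not transfer. Proposition \ref{ag} only \emph{separates} orbits of two already-integral points, and Proposition \ref{ag2} proves integrality of \emph{group elements stabilizing an already-integral vector} $v\in\bold V(W(k))$, via $S$-invariants on an auxiliary representation $\bold U$ and Noether's theorem for the finite group $S_0$. Neither gives the statement you need, which is of a different shape: that a point $v\in\bold V(\overline K)$ whose image $c(v)$ under a $\bold G$-equivariant map is conjugate to an integral point is itself $\bold G(\overline K)$-conjugate to an integral point. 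Knowing that the pulled-back invariants $c^\ast h$ take values in $W(k)$ at $v$ is far from sufficient: invariants are constant on orbits, so one would need a genuine ``integral Hilbert--Mumford/Kempf--Ness over a DVR'' statement (closed orbit with integral invariants meets the integral points), plus control of the fibers of $c$; none of this is established in the paper or in your sketch, and your appeal to ``the requisite finiteness properties of $c$ along $\bold G$-orbits'' is precisely the unproved assertion. Note also that the paper explicitly records (in the remark following Theorems \ref{t7a}, \ref{t7b}) that it does \emph{not} know whether an arbitrary tensor functor between lifts is integral, so any argument at this point must use the witness $F_2$ in a structurally essential way, not merely through its invariants.

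The paper closes this gap by a categorical mechanism rather than invariant theory. Having made $F$ and $F_2$ surjective, it passes to the dual functor $F^*_{\widehat\D}:(\widehat{\C}_2)^*_{\widehat\D}\to(\widehat{\C}_1)^*_{\widehat\D}=\widehat{(\C_1)^*_\D}$, which is an \emph{inclusion} of multifusion categories by \cite{ENO}, Proposition 5.3. Inclusions are automatically integral: full tensor subcategories over $k$ and over $\overline K$ are in bijection via fusion subrings of the (common) Grothendieck ring, so $(\widehat\C_2)^*_{\widehat\D}$ is the lift of a subcategory $\B\subset(\C_1)^*_\D$, separable by Theorem \ref{subca}. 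Dualizing back, $F\cong F'\circ\widehat{H^*_\D}$ where $H^*_\D:\C_1\to\B^*_\D$ is a surjection defined over $k$ and $F'$ is an \emph{equivalence} $\widehat{\B^*_\D}\cong\widehat{\C_2}$; Theorem \ref{t5} then descends $F'$, and the composite descends $F$. So the only invariant-theoretic input needed is the one already packaged in Theorem \ref{t5} (integrality of equivalences), and the role of the hypothesis $\widehat F_2\circ F\cong\widehat F_1$ is to provide the module category $\widehat\D$ over which to dualize. You would need either to adopt this duality argument or to actually prove the integral GIT statement you are gesturing at; as written, the proposal does not establish the theorem.
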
 

Note that Theorem \ref{t3a} is recovered from Theorem \ref{t7a} when $\D=\Vec_k$ (namely, $H_i={\rm Coend}F_i$). 
Moreover, if $\D=\Rep A$, where $A$ is a semisimple $k$-algebra, then 
Theorem \ref{t7a} gives a generalization of Theorem \ref{t3a} to weak Hopf algebras. 

\begin{proof} The proof is similar to the proof of Theorem \ref{t3a}. 
If $\C$ is a separable multifusion category over $k$, there is a natural bijection 
between full tensor subcategories of $\C$ and $\widehat{\C}$. 
In particular, ${\rm Im}F$ is a lift of some multifusion subcategory 
$\mathcal E\subset \C_2$. 
By Theorem \ref{subca}, $\mathcal E$ is separable. Thus, 
we may replace $\C_2$ by $\mathcal E$, i.e., we may assume without loss of generality that $F$ is surjective. 

Also, we may replace $\D$ with the image ${\rm Im}F_2$ of $F_2$, which is separable by Theorem \ref{quo} (or Theorem \ref{subca}), i.e., we may assume without loss of generality that $F_2$ (hence $\widehat{F_2}$, hence $\widehat{F_1}$, 
hence $F_1$) is surjective. 

Consider the dual functor $F^*_{\widehat\D}: (\widehat{\C}_2)_{\widehat{\D}}^*\to (\widehat{\C}_1)_{\widehat{\D}}^*=\widehat{(\C_1)^*_{\D}}$, which is an inclusion of multifusion categories by \cite{ENO}, Proposition 5.3. Thus, $(\widehat{\C}_2)_{\widehat{\D}}^*$ is a lift of some multifusion subcategory 
$\B$ of $({\C}_1)_{\D}^*$. Moreover, by Theorem \ref{subca}, $\B$ is separable, so $(\widehat{\C}_2)_{\widehat{\D}}^*=\widehat{(\C_2)_\D^*}\cong \widehat \B$. 
Let $H:  \B\hookrightarrow ({\C}_1)_{\D}^*$ be the corresponding inclusion functor. 
Then the dual functor $H^*_{\D}: \C_1\to \B^*_{\D}$ is a surjection, and 
$F\cong F'\circ \widehat{H^*_{\D}}$, where $F': \widehat{\B^*_{\D}}\cong \widehat{\C_2}$ 
is an equivalence. By Theorem \ref{t5}, $F'$ is isomorphic to the lift $\widehat{F_0'}$ of an equivalence $F_0': {\B^*_{\D}}\cong {\C_2}$, hence $F$ is isomorphic to the lift $\widehat{F_0}$ of a tensor functor $F_0=F_0'\circ H^*_{\D}: \C_1\to \C_2$. 
This proves the theorem. 
\end{proof} 

\begin{theorem} \label{t7b}
Let $\C_1,\C_2,\D$ be separable multifusion categories over $k$. Let 
$F_i: \D\to \C_i$ be tensor functors, such that $F_1$ is surjective. 
Let $F: \widehat{\C_1}\to \widehat{\C_2}$ 
be a tensor functor such that 
$F\circ \widehat F_1\cong \widehat F_2$. Then 
there exists a tensor functor 
$F_0: \C_1\to \C_2$ such that $F_0\circ F_1\cong F_2$, 
and $F\cong \widehat F_0$. In other words, if tensor functors $G$ and $F\circ G$ are integral and $G$ is surjective then $F$ is integral. 
\end{theorem}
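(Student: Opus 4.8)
The plan is to mimic the proof of Theorem \ref{t7a}, but working on the dual side first, since here the functors $F_i$ go \emph{into} the $\C_i$ rather than out of them. First I would reduce to the case where $F$ is surjective: since $F_1$ is surjective, so is $\widehat F_1$, hence $\widehat F_2 \cong F\circ \widehat F_1$ is surjective, hence $F_2$ is surjective; now ${\rm Im}F \subset \widehat{\C_2}$ is a lift of some full multitensor subcategory $\mathcal E \subset \C_2$ (using the bijection between full tensor subcategories of a separable multifusion category and those of its lift, together with Theorem \ref{subca} that $\mathcal E$ is separable), and since $\widehat F_2$ already lands in ${\rm Im} F$, the functor $F_2$ lands in $\mathcal E$; so we may replace $\C_2$ by $\mathcal E$ and assume $F$ is surjective. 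Note that since $F_1$ and $F$ are surjective, so is $F_2$.

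Next I would pass to dual categories with respect to $\D$. Applying \cite{ENO}, Proposition 5.3 to the surjective tensor functors $F_1:\D\to\C_1$ and $F_2:\D\to\C_2$, we get fully faithful embeddings $(F_i)^*_{\D}: \C_i \hookrightarrow \D^*_{\D'}$ for the appropriate module category, and dualizing the relation $F\circ F_1 \cong F_2$ yields a relation of the form $(F_1)^*_\D \cong (F_2)^*_\D \circ F^*$, where $F^*$ is the functor dual to $F$ (I would need to be careful about which module category is used as the dualizing object, choosing it so that $(F_1)^*_\D$ is surjective, which it is because $F_1$ is). Thus we are in the setting of Theorem \ref{t7a}: we have separable multifusion categories, tensor functors $(F_i)^*_\D: \C_i \to \D^*_{\D'}$, and a tensor functor $F^*$ between their targets — wait, actually $F^*$ goes $(\C_2)^*_\D \to (\C_1)^*_\D$, so the roles get swapped and we have $(F_1)^*_\D \cong (F_2)^*_\D\circ F^*$, which is exactly the hypothesis of Theorem \ref{t7a} with $G=(F_2)^*_\D$ and the composite $G\circ F^* = (F_1)^*_\D$ both integral. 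By Theorem \ref{t7a}, $F^*$ is integral, i.e., $F^* \cong \widehat{F_0^*}$ for a tensor functor $F_0^*: (\C_2)^*_\D \to (\C_1)^*_\D$ over $W(k)$.

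Finally I would dualize back: taking duals with respect to the appropriate module category returns $F_0^*$ to a tensor functor $F_0: \C_1 \to \C_2$ with $F_0\circ F_1 \cong F_2$, and the compatibility of lifting with the dual construction (which holds because the dual category construction is functorial and compatible with base change from $W(k)$, as already used implicitly throughout Section 4) gives $F \cong \widehat{F_0}$. The main obstacle I anticipate is the bookkeeping in the dualization step: one must verify that dualizing the triangle $F\circ \widehat F_1 \cong \widehat F_2$ with respect to the right module category produces precisely the triangle needed to invoke Theorem \ref{t7a} (as opposed to some transpose of it), and that the surjectivity hypothesis migrates correctly — $F_1$ surjective should become $(F_1)^*_\D$ a fully faithful embedding, which is the hypothesis Theorem \ref{t7a} needs on the functor it calls ``$G$'', and indeed in Theorem \ref{t7a} the functor playing the role of $G$ need not be surjective, so this matches. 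I would also double-check that all three dual categories appearing are separable, which follows from Theorem \ref{quo} and Theorem \ref{subca} since duals of separable multifusion categories with respect to exact module categories are separable.
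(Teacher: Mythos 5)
Your overall strategy --- dualize so that the triangle of functors points the other way, invoke Theorem \ref{t7a}, then dualize back --- is exactly the paper's strategy. But there is a genuine gap at the point you yourself flag as ``bookkeeping'': the choice of dualizing module category is not a bookkeeping issue, and the one you suggest does not work. Since the $F_i$ go \emph{into} the $\C_i$, the category $\D$ is not a module category over the $\C_i$, so there is no functor ``$(F_i)^*_\D$''; to dualize $F:\widehat{\C_1}\to\widehat{\C_2}$ at all you must use a module category over $\widehat{\C_2}$, and the paper uses $\widehat{\C_2}$ itself. The $\widehat{\C_1}$-module structure on $\widehat{\C_2}$ is then given by $F$, which lives only over $\overline K$. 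Consequently $(\widehat{\C_1})^*_{\widehat{\C_2}}$ is \emph{not} a priori the lift of any category over $k$, and $(\widehat F_1)^*_{\widehat{\C_2}}$ is not a priori integral --- so Theorem \ref{t7a}, whose hypotheses require the two outer functors to be lifts of functors between separable categories over $k$, cannot be applied directly to $F^*$ as you propose. This integrality of the dual data is precisely what has to be \emph{proved}, not assumed.

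The missing idea is the following device from the paper's proof. The target $\widehat{\D}^*_{\widehat{\C_2}}=\widehat{\D^*_{\C_2}}$ \emph{is} a lift, because the $\D$-module structure on $\C_2$ comes from $F_2$, which is defined over $k$. Since $\widehat F_1$ is surjective, $(\widehat F_1)^*_{\widehat{\C_2}}$ is a fully faithful embedding (\cite{ENO}, Proposition 5.3), so its image is a full multifusion subcategory of $\widehat{\D^*_{\C_2}}$ and hence the lift $\widehat{\B}$ of some $\B\subset\D^*_{\C_2}$, separable by Theorem \ref{subca}. Writing $(\widehat F_1)^*_{\widehat{\C_2}}=\widehat H\circ L$ with $H:\B\hookrightarrow\D^*_{\C_2}$ and $L$ an equivalence, the relation $(\widehat F_1)^*_{\widehat{\C_2}}\circ F^*_{\widehat{\C_2}}=\widehat{(F_2)^*_{\C_2}}$ puts $\widehat H$ and $\widehat H\circ(L\circ F^*_{\widehat{\C_2}})$ in the setting of Theorem \ref{t7a}, giving $L\circ F^*_{\widehat{\C_2}}=\widehat M$; dualizing back one still needs Theorem \ref{t5} to descend the equivalence $L^*_{\widehat{\C_2}}$ before concluding $F=\widehat{F_0}$. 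Your final paragraph appeals instead to a general ``compatibility of lifting with the dual construction,'' which begs the question, since the dual construction here is performed over $\overline K$ with respect to a module structure that is not known to descend. (A separate minor slip: $\widehat F_2\cong F\circ\widehat F_1$ is surjective iff $F$ is, not because $\widehat F_1$ is; the reduction still goes through by replacing $\C_2$ with ${\rm Im}\,F_2$, whose lift equals ${\rm Im}\,F$.)
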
 

\begin{proof} We may replace $\C_2$ with ${\rm Im}F_2$, which
is separable by Theorem \ref{quo} or Theorem \ref{subca}, and assume that $F_2,\widehat F_2,F$ are surjective.  
Then by \cite{ENO}, Proposition 5.3, we have an inclusion $(\widehat F_1)^*_{\widehat\C_2}: (\widehat{\C_1})^*_{\widehat\C_2}\hookrightarrow \widehat\D^*_{\widehat\C_2}=\widehat{\D^*_{\C_2}}$. The image of $(\widehat F_1)^*_{\widehat\C_2}$ is then a lift of some multifusion subcategory 
$\B\subset \D^*_{\C_2}$. By Theorem \ref{subca}, $\B$ is separable.  Let $H: \B\hookrightarrow \D^*_{\C_2}$ be the corresponding inclusion functor. 
Then $(\widehat F_1)^*_{\widehat \C_2}$ defines an equivalence \linebreak
$L: (\widehat{\C_1})^*_{\widehat\C_2}\cong \widehat \B$ such that $(\widehat F_1)^*_{\widehat \C_2}=\widehat H\circ L$.  
Then 
$$
\widehat H\circ L\circ F^*_{\widehat\C_2}=(\widehat F_1)^*_{\widehat \C_2}\circ F^*_{\widehat\C_2}=(\widehat F_2)^*_{\widehat \C_2}=\widehat{(F_2)^*_{\C_2}}.
$$ 
Thus, by Theorem \ref{t7a}, $L\circ F^*_{\widehat\C_2}=\widehat{M}$ for some $M: \C_2\to \B$. Dualizing, we get 
$\widehat {M^*_{\C_2}}=F\circ L^*_{\widehat \C_2}$, where $L^*_{\widehat \C_2}: \widehat{\B_{\C_2}^*}\to \widehat{\C}_1$ is an equivalence. 
By Theorem \ref{t5}, $L^*_{\widehat \C_2}=\widehat N$ for some equivalence $N: \B_{\C_2}^*\to \C_1$. Thus, 
$F=\widehat F_0$, where $F_0:=M^*_{\C_2}\circ N^{-1}$. 
\end{proof}

\begin{remark} In spite of Theorems \ref{t7a} and \ref{t7b}, in general 
we don't know if any tensor functor $F: \widehat \C_1\to \widehat \C_2$ between liftings of separable (multi)fusion categories 
is always isomorphic to a lifting of a tensor functor $F_0: \C_1\to \C_2$, even in the case when $\C_2=\Vec_k$ and $\C_1=\Rep H$, where $H$ is a semisimple cosemisimple Hopf algebra over $k$. In this special case, this is the question whether any Drinfeld twist $J$ for $\widehat{H}$ is gauge equivalent to a lifting of a twist $J_0$ for $H$.  

Likewise, we don't know if any fusion category or semisimple cosemisimple Hopf algebra in characteristic zero whose 
(global) dimension is coprime to $p$ descends to characteristic $p$. 
\end{remark}

\end{document}